\theoremstyle{definition}
\newtheorem{thm}{Theorem}[section]
\newtheorem*{thm*}{Theorem}
\newtheorem*{conj*}{Conjecture}
\newtheorem*{conv*}{Convention}
\newtheorem*{cor*}{Corollary}
\newtheorem*{defn*}{Definition}
\newtheorem*{exa*}{Example}
\newtheorem{exc}[thm]{Exercise}
\newtheorem*{exc*}{Exercise}
\newtheorem*{fact*}{Fact}
\newtheorem{lem}[thm]{Lemma}
\newtheorem*{lem*}{Lemma}
\newtheorem*{prob*}{Problem}
\newtheorem*{prop*}{Proposition}
\newtheorem*{ques*}{Question}
\newtheorem{rmk}[thm]{Remark}
\newtheorem*{rmk*}{Remark}
\newcommand{\mc}{\mathcal}
\newcommand{\C}{\mathbb{C}}
\renewcommand{\H}{\mathbb{H}}
\renewcommand{\P}{\mathbb{P}}
\newcommand{\Q}{\mathbb{Q}}
\newcommand{\R}{\mathbb{R}}
\newcommand{\Z}{\mathbb{Z}}
\newcommand{\cC}{\mathcal{C}}
\newcommand{\cM}{\mathcal{M}}
\newcommand{\al}{\alpha}
\newcommand{\Gam}{\Gamma}
\newcommand{\gam}{\gamma}
\newcommand{\eps}{\varepsilon}
\newcommand{\sig}{\sigma}
\newcommand{\Om}{\Omega}
\newcommand{\om}{\omega}
\newcommand{\ra}{\rightarrow}
\newcommand{\ol}{\overline}
\newcommand{\pr}{\prime}
\newcommand{\wt}{\widetilde}
\newcommand{\sm}{\setminus}
\DeclareMathOperator{\GL}{GL}
\DeclareMathOperator{\SL}{SL}
\DeclareMathOperator{\Aut}{Aut}
\DeclareMathOperator{\lcm}{lcm}
\DeclareMathOperator{\ord}{ord}
\DeclareMathOperator{\re}{Re}
\DeclareMathOperator{\sgn}{sgn}
\DeclareMathOperator{\Tr}{Tr}
\newcommand{\be}{\begin{equation*}}
\newcommand{\ee}{\end{equation*}}
\newcommand{\bex}{\begin{exc}}
\newcommand{\eex}{\end{exc}}
\newcommand{\bpf}{\begin{proof}}
\newcommand{\epf}{\end{proof}}
\title{Uniqueness of the Veech 14-gon}
\author[Karl Winsor]{Karl Winsor \\ \\ \monthname[\the\month] \the\day, \the\year}
\begin{document}

\maketitle

\begin{abstract}
We obtain the first complete classification result for algebraically primitive Teichm\"{u}ller curves in a stratum component in genus greater than $2$. Specifically, we show that the Veech $14$-gon generates the unique algebraically primitive Teichm\"{u}ller curve in the hyperelliptic component of the stratum $\Omega\mathcal{M}_3(2,2)$.
\end{abstract}


\section{Introduction} \label{sec:intro}

The moduli space $\cM_g$ of closed Riemann surfaces of genus $g$ is swept out by complex geodesics $f : \H \ra \cM_g$ which are isometrically immersed for the Teichm\"{u}ller metric on $\cM_g$. The image of a typical complex geodesic is dense. However, occasionally it may happen that $f$ is stabilized by a lattice $\Gam$ in $\Aut(\H)$, in which case $f$ covers an algebraic {\em Teichm\"{u}ller curve}. The study of Teichm\"{u}ller curves has rich connections with the dynamics of polygonal billiards and interval exchanges, the theory of real multiplication and torsion packets in families of Jacobians, and the flat geometry of holomorphic $1$-forms.

Let $\Om\cM_g \ra \cM_g$ be the bundle of nonzero holomorphic $1$-forms, denoted by pairs $(X,\om)$ with $X \in \cM_g$ and $0 \neq \om \in \Om(X)$. The space $\Om\cM_g$ has a natural action of $\GL^+(2,\R)$ arising from the Teichm\"{u}ller geodesic flow and scaling by complex numbers. Up to passing to a double cover, every Teichm\"{u}ller curve arises from the projection of a closed $\GL^+(2,\R)$-orbit in $\Om\cM_g$, so we will only consider Teichm\"{u}ller curves arising in this way. The $\GL^+(2,\R)$-action preserves each stratum $\Om\cM_g(k_1,\dots,k_n)$ of $1$-forms with $n$ distinct zeros of multiplicities $k_1,\dots,k_n$, so one can also refer to Teichm\"{u}ller curves in a stratum.

A Teichm\"{u}ller curve in $\Om\cM_g$ is {\em primitive} if it does not arise from a Teichm\"{u}ller curve in $\Om\cM_h$ for some $h < g$ by a covering construction. The first infinite family of primitive Teichm\"{u}ller curves was discovered in \cite{Vee:Teich}, and their underlying Riemann surfaces arise from billiards in regular $n$-gons via an unfolding construction \cite{KZ:unfolding}. An infinite family of primitive Teichm\"{u}ller curves in genus $2$ was discovered independently in \cite{Cal:Teich} and \cite{McM:HMS}. A complete classification of primitive Teichm\"{u}ller curves in genus $2$ appears in \cite{McM:discriminant}, \cite{McM:torsion}.

The traces of elements of the stabilizer of $f$ generate a number field of degree at most $g$, called the {\em trace field} of $f$. A Teichm\"{u}ller curve in $\Om\cM_g$ is {\em algebraically primitive} if its trace field has the maximum possible degree $g$. The relative abundance of Teichm\"{u}ller curves depends strongly on the degree of the trace field. Teichm\"{u}ller curves with trace field $\Q$ are dense in $\Om\cM_g$ for all $g$, and several infinite families of Teichm\"{u}ller curves with degree $2$ trace fields have been discovered \cite{Cal:Teich}, \cite{McM:HMS}, \cite{McM:Prym}, \cite{EMMW:rank2}, \cite{MMW:rank2}. In contrast, in each genus $g \geq 3$, there are only finitely many Teichm\"{u}ller curves whose trace field has degree at least $3$ \cite{EFW:finiteness}. Several other finiteness results are known for such Teichm\"{u}ller curves \cite{BM:real}, \cite{BHM:finiteness}, \cite{MW:fin}, \cite{Mol:finiteness}, \cite{Ham:finiteness}, some of which are in principle effective. Nevertheless, even in genus $3$, a complete classification of primitive Teichm\"{u}ller curves remains a major challenge.

In this paper, we resolve a special case of the classification problem for Teichm\"{u}ller curves in genus $3$. Our focus will be on algebraically primitive Teichm\"{u}ller curves in the hyperelliptic connected component of the stratum $\Om\cM_3(2,2)$. This component consists of pairs $(X,\om)$ where $X$ is hyperelliptic and the hyperelliptic involution $\tau : X \ra X$ exchanges the two zeros of $\om$. There is one known example of an algebraically primitive Teichm\"{u}ller curve in this stratum component, discovered by Veech in \cite{Vee:Teich}, and generated by the holomorphic $1$-form obtained from a regular $14$-gon by identifying pairs of opposite sides. We refer to this holomorphic $1$-form as the {\em Veech $14$-gon}. Our main result is the first complete classification result for algebraically primitive Teichm\"{u}ller curves in a stratum component in genus greater than $2$.

\begin{thm} \label{thm:14gon}
The Veech $14$-gon generates the unique algebraically primitive Teichm\"{u}ller curve in the hyperelliptic component of $\Omega\mc{M}_3(2,2)$.
\end{thm}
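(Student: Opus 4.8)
The plan is to combine the known finiteness results for algebraically primitive Teichm\"{u}ller curves with sharp geometric and arithmetic constraints coming from the specific stratum component, ultimately reducing to a finite search that isolates the $14$-gon. First I would fix notation: suppose $(X,\om)$ generates an algebraically primitive Teichm\"{u}ller curve $C$ in the hyperelliptic component of $\Om\cM_3(2,2)$, so the trace field $K$ is a totally real cubic field, $X$ carries the hyperelliptic involution $\tau$ exchanging the two double zeros of $\om$, and the Jacobian of $X$ admits real multiplication by an order $\sO$ in $K$ with $\om$ an eigenform. The flat surface $(X,\om)$ has area normalized to $1$ and a Veech group $\Gam = \SL(X,\om)$ that is a lattice in $\SL(2,\R)$, hence $C = \H/\Gam$ has finite hyperbolic volume and a well-defined genus and number of cusps.

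The core of the argument is to bound the combinatorial complexity of $(X,\om)$. Each cusp of $C$ corresponds to a cylinder decomposition of $(X,\om)$ in a periodic direction, and the parabolic generating that cusp encodes the moduli of the cylinders; algebraic primitivity forces the moduli ratios to generate $K$, and the cross-ratios of the singularities to be algebraic numbers of controlled height. I would run the following steps in order. (1) Use the real multiplication structure and the eigenform equation to show that, after rescaling by a totally real algebraic number, the periods of $\om$ lie in $K$, and the holonomy field is exactly $K$; deduce via the work on algebraically primitive curves (as cited, \cite{BM:real}, \cite{EFW:finiteness}) an explicit upper bound on the discriminant $d$ of $\sO$. (2) For each admissible discriminant, enumerate the possible cylinder decompositions in a given periodic direction: since the stratum is $\Om\cM_3(2,2)$ with the hyperelliptic symmetry, the number of cylinders is at most $4$ and the combinatorics of how a genus-$3$ translation surface with two fourfold cone points decomposes into horizontal cylinders is a finite list, further cut down by $\tau$-invariance (the involution permutes the cylinders and acts on each by a half-turn). (3) For each combinatorial type, the $\tau$-invariance and the requirement that two transverse periodic directions both exist pin down the ratios of circumferences and heights as solutions of explicit polynomial equations over $K$; primitivity and the trace-field condition then leave only finitely many solutions, which one checks by direct computation, finding that the only one whose Veech group is a lattice is the regular $14$-gon.

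I would organize the write-up so that Step (1) reduces to citing and lightly adapting the finiteness machinery, Step (2) is a finite enumeration of ribbon-graph/separatrix-diagram data respecting the hyperelliptic involution, and Step (3) is the decisive computation. The main obstacle I anticipate is Step (3): even after the combinatorial types are listed, extracting a genuinely finite and checkable set of candidate surfaces requires exploiting a second transverse periodic direction (or a third, generic direction) and translating the compatibility of the two cylinder decompositions into a tractable system — the naive bound on the discriminant from general finiteness theorems is far too large for a brute-force search, so the real work is sharpening it using the rigidity imposed by the double zeros, the hyperelliptic structure, and the interaction of the two transverse decompositions (in the spirit of the genus-$2$ classification of \cite{McM:discriminant}, \cite{McM:torsion} and the Prym constructions of \cite{McM:Prym}). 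Controlling this system, and verifying that no sporadic small-discriminant example slips through, is where the bulk of the argument lies; once the list is finite and explicit, identifying the $14$-gon and confirming it is the unique lattice example is routine.
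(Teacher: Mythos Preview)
Your proposal has a genuine gap at the point you yourself flag as the main obstacle: you never identify a mechanism that makes the search effective. The paper's proof does \emph{not} proceed by bounding the discriminant of the order $\sO$ and then enumerating; as you correctly anticipate, the general finiteness bounds are hopeless for this. Instead, the decisive input is M\"oller's torsion condition (Theorem~\ref{thm:RMtorsion}(2)): the difference of the two zeros is torsion in the Jacobian. Degenerating along an irreducible periodic direction produces a stable form on a nodal rational curve whose poles, by the torsion condition, lie at roots of unity $x_1,x_2,x_3$, and the residues give explicit $K$-linear relations $\sum c_j(x_j^r - x_j^{-r}) = 0$ among them (Theorem~\ref{thm:res}). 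The orders of these roots of unity are then bounded not by height or discriminant arguments but by a Mann-type theorem of Dvornicich--Zannier on primitive $K$-relations in roots of unity (Theorem~\ref{thm:DZ}), combined with casework on $2$-adic valuations. This is the engine that reduces the problem to a handful of candidates with $n \in \{7,14,18\}$; your proposal contains no analogue of it.

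A secondary issue: algebraic primitivity forces exactly $g=3$ cylinders in any periodic direction, not ``at most $4$'', and the paper shows (Lemma~\ref{lem:hypirred}) that the cylinder digraph in an irreducible direction is necessarily a chain, so there is a \emph{unique} combinatorial type rather than a list to enumerate. Your Step (3) is also too vague to succeed as stated: the paper's endgame shears so the middle cylinder has twist zero, observes the vertical direction is then reducible with first-return maps given by rotations, and converts periodicity of those rotations plus Veech's rational-moduli condition into an explicit quadratic Diophantine constraint (equation~(\ref{eq:mat}) and the discriminant analysis in the proof of Theorem~\ref{thm:14gon}) that forces all twists to vanish. Without the roots-of-unity reduction upstream and this concrete twist analysis downstream, your outline does not close.
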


Finiteness of algebraically primitive Teichm\"{u}ller curves in this stratum component was first proven in \cite{Mol:finiteness}. Our approach to Theorem \ref{thm:14gon} builds off of the approach in \cite{Mol:finiteness}. In Section \ref{sec:irred}, we gather some useful dynamical, algebro-geometric, and number theoretic results from \cite{Vee:Teich}, \cite{Mol:RM}, \cite{Mol:torsion}, \cite{BM:real}, and we review the approach in \cite{Mol:finiteness} for constraining the irreducible periodic directions of a holomorphic $1$-form generating an algebraically primitive Teichm\"{u}ller curve. We then give a flat-geometric argument showing that there is a unique combinatorial type of irreducible periodic direction in our setting.

Up to a real scalar, the cylinder circumferences in an irreducible periodic direction form a $\Q$-basis for a cubic number field $K$. In \cite{Mol:finiteness}, it is shown that these circumferences are the coefficients of an explicit system of $K$-linear equations in roots of unity. The orders of these roots of unity can be bounded using a generalization of Mann's theorem \cite{Man:roots} in \cite{DZ:roots}. However, the bounds in \cite{DZ:roots} only apply to primitive $K$-linear relations (meaning no proper nonempty subsum vanishes), and the bounds provided are not good enough for a feasible computer search in our setting. In Section \ref{sec:circum}, we apply the results of \cite{DZ:roots}, and some casework based on primitivity and the powers of $2$ dividing the orders of the roots of unity, to narrow down the possible solutions. Once the circumferences are known, a result in \cite{BM:real} determines the corresponding heights, up to a real scalar. Specifically, the circumferences and heights are real multiples of dual bases for $K$ with respect to the trace pairing. Using these constraints and a small amount of computer assistance, we obtain a short list of possible cylinder circumferences and heights.

It remains to determine the possible twist parameters of the horizontal cylinders. In Section \ref{sec:main}, we show that in our setting, it is possible to horizontally shear so that the vertical flow has a first-return map given by a pair of disjoint rotations. These rotations must be periodic \cite{Vee:Teich}, and it is then possible to explicitly write down the moduli of the vertical cylinders and check whether they have rational ratios. This allows us to conclude that the holomorphic $1$-form in question generates the same Teichm\"{u}ller curve as the Veech $14$-gon. \\

\paragraph{\bf Notes and references.} The infinite family of primitive Teichm\"{u}ller curves arising from regular polygons in \cite{Vee:flow} was generalized in \cite{BM:Teich}, and an alternative description of the latter family in terms of semi-regular polygons was discovered in \cite{Hoo:semi}. These families include only finitely many primitive Teichm\"{u}ller curves in each genus. Infinite families of primitive Teichm\"{u}ller curves in genus $2$, $3$, and $4$ were discovered in \cite{Cal:Teich}, \cite{McM:HMS}, \cite{McM:Prym}, \cite{MMW:rank2}, \cite{EMMW:rank2}. In each genus $g \geq 5$, it is unknown whether there are infinitely many primitive Teichm\"{u}ller curves. Primitive Teichm\"{u}ller curves were classified in genus $2$ in \cite{McM:discriminant}, \cite{McM:torsion}. The results of \cite{EFW:finiteness} together with \cite{NW:nonveech}, \cite{ANW:h4odd}, \cite{AN:rank2}, \cite{AN:rank22}, imply that all but finitely many primitive Teichm\"{u}ller curves in genus $3$ have been discovered, as observed in \cite{McM:survey}. This finiteness result is non-effective. Additional non-effective finiteness results appear in \cite{Ham:finiteness}, \cite{MW:fin}, \cite{LNW:fin}, and effective finiteness results appear in \cite{BM:real}, \cite{BHM:finiteness}, \cite{LM:prym}, \cite{Mol:finiteness}. All of these effective finiteness results rely on the special algebro-geometric properties of Teichm\"{u}ller curves established in \cite{Mol:RM}, \cite{Mol:torsion}. \\

\paragraph{\bf Acknowledgements.} The author thanks Matt Bainbridge and Curt McMullen for interesting and informative discussions on topics related to this paper. The author was supported by an NSF GRFP under grant DGE-1144152 and by a Simons Postdoctoral Fellowship at the Fields Institute.


\section{Irreducible periodic directions} \label{sec:irred}

We first review background material on Teichm\"{u}ller curves and Veech surfaces. We then recall several results regarding periodic directions of Veech surfaces and algebro-geometric properties of Teichm\"{u}ller curves, and we review the approach in \cite{Mol:finiteness} for constraining the geometry of an irreducible periodic direction for an algebraically primitive Veech surface in the hyperelliptic component of $\Om\cM_g(g-1,g-1)$. For more background material, we refer to the survey articles \cite{McM:survey}, \cite{Wri:survey}, \cite{Zor:survey}.

Fix $g \geq 2$, and let $\cM_g$ be the moduli space of closed Riemann surfaces of genus $g$. Let $\Om\cM_g$ be the moduli space of pairs $(X,\om)$ where $X \in \cM_g$ and $\om$ is a nonzero holomorphic $1$-form on $X$. Denote by $Z(\om)$ the set of zeros of $\om$. Integrating $\om$ on small open subsets of $X \sm Z(\om)$ gives an atlas of charts to the complex plane $\C$ whose transition maps are translations. The Euclidean metric on $\C$ induces a flat metric $|\om|$ with a cone point of angle $2\pi(k+1)$ at a zero of order $k$. A foliation of $\C$ by parallel lines induces a foliation of $X$ with a $k$-prong singularity at each zero of $\om$ of order $k$.

A {\em saddle connection} on $(X,\om)$ is a geodesic segment for the metric $|\om|$ whose endpoints are zeros of $\om$ and whose interior does not contain any zeros. A {\em cylinder} is a maximal connected open subset of $X \sm Z(\om)$ that is a union of parallel closed geodesics. Any cylinder $C$ is isometric to a unique Euclidean cylinder of the form $\R/c\Z \times (0,h)$ with $c,h > 0$, and we say that $c$ is the {\em circumference} and $h$ the {\em height} of $C$. The {\em modulus} of $C$ is $h/c$. A direction is {\em periodic} for $(X,\om)$ if all of the leaves of the straight-line foliation in that direction are either closed geodesics in $X \sm Z(\om)$ or saddle connections. In this case, $X$ is a union of finitely many parallel cylinders and finitely many parallel saddle connections. A periodic direction is {\em irreducible} if the union of the saddle connections in that direction is connected, and {\em reducible} otherwise.

The space $\Om\cM_g$ is a union of strata $\Om\cM_g(k_1,\dots,k_n)$ consisting of pairs $(X,\om)$ such that $\om$ has exactly $n$ distinct zeros of orders $k_1,\dots,k_n$. Strata can have up to $3$ connected components, which are classified by hyperellipticity and the parity of a spin structure \cite{KZ:components}. In this paper, we will focus on the hyperelliptic component of $\Om\cM_g(g-1,g-1)$, which we denote by $\Om\cM_g(g-1,g-1)^{\rm hyp}$. We have $(X,\om) \in \Om\cM_g(g-1,g-1)^{\rm hyp}$ if and only if $X$ is hyperelliptic and the hyperelliptic involution $\tau : X \ra X$ exchanges the two zeros of $\om$.

Let $\GL^+(2,\R)$ be the group of linear automorphisms of $\R^2$ with positive determinant. The standard $\R$-linear action of $\GL^+(2,\R)$ on the complex plane $\C$ induces an action on $\Om\cM_g$ that preserves connected components of strata. Orbits of the $\GL^+(2,\R)$-action project to complex geodesics in $\cM_g$, and the stabilizer of $(X,\om)$ is a discrete subgroup $\SL(X,\om) \subset \SL(2,\R)$. When $\SL(X,\om)$ is a lattice in $\SL(2,\R)$, the orbit of $(X,\om)$ projects to an isometrically immersed algebraic {\em Teichm\"{u}ller curve} $\H \ra \cM_g$. In this case, $(X,\om)$ is called a {\em Veech surface}. If $\cC$ is the stratum component containing $(X,\om)$, we say that $(X,\om)$ {\em generates} a Teichm\"{u}ller curve in $\cC$. \\

\begin{thm} \cite{Vee:Teich} \label{thm:Veech}
Let $(X,\om)$ be a Veech surface. For any saddle connection on $(X,\om)$, the foliation in the direction of that saddle connection is periodic. Moreover, the moduli of the cylinders on $(X,\om)$ in that direction have rational ratios.
\end{thm}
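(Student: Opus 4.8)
The plan is to prove Veech's dichotomy-type statement via the geometry of the $\GL^+(2,\R)$-action and Ratner-type/discreteness arguments adapted to the Veech setting. First I would recall that a saddle connection direction, after rotating so that the saddle connection is horizontal, corresponds to a parabolic or hyperbolic element of $\SL(2,\R)$ being near the relevant element: the key mechanism is that a cylinder or a ``complex of parallel saddle connections'' in the horizontal direction produces, under the horocycle flow $u_t = \left(\begin{smallmatrix} 1 & t \\ 0 & 1 \end{smallmatrix}\right)$, a trajectory that does not escape to infinity. Concretely, if the horizontal direction were \emph{not} periodic, then there would be a minimal but non-uniquely-ergodic or a dense-but-non-closed leaf; in either case one shows the forward $u_t$-orbit of $(X,\om)$ in the stratum is divergent (leaves every compact set), because the systoles in directions near horizontal shrink to zero without the foliation being a union of cylinders. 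This is the standard ``no small saddle connections $\Rightarrow$ bounded horocycle orbit'' principle going back to Masur's criterion.

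Next I would use that $\SL(X,\om)$ is a lattice. Since $(X,\om)$ is a Veech surface, its $\SL(2,\R)$-orbit is closed and the orbit map $\SL(2,\R)/\SL(X,\om) \to \Om\cM_g$ is proper onto its image. A lattice in $\SL(2,\R)$ has the property that every horocycle orbit in $\SL(2,\R)/\SL(X,\om)$ is either closed (periodic) or equidistributes (Hedlund/Dani--Smillie); in particular, a horocycle orbit that does not escape to infinity must be \emph{closed}, forcing the existence of a parabolic element $P \in \SL(X,\om)$ fixing the horizontal direction. The existence of such a parabolic is what forces periodicity: a parabolic element of $\SL(X,\om)$ fixing the horizontal direction acts on $(X,\om)$ as a product of Dehn twists in horizontal cylinders, which is only possible if the horizontal direction decomposes $X$ into cylinders (and saddle connections). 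So the argument is: saddle connection in a direction $\Rightarrow$ that horocycle orbit is non-divergent (Step 1) $\Rightarrow$ it is closed, yielding a parabolic in $\SL(X,\om)$ $\Rightarrow$ the direction is periodic (cylinder decomposition). Then, writing $P = \left(\begin{smallmatrix} 1 & \mu \\ 0 & 1 \end{smallmatrix}\right)$ in suitable coordinates, the condition that $P$ is a single automorphism (rather than needing different powers on different cylinders) translates exactly into: for each horizontal cylinder $C_i$ with modulus $m_i$, the twist $1/m_i$ is an integer multiple of the common shear $\mu$, i.e. $\mu\, m_i \in \Z$ for all $i$ after rescaling; hence the $m_i$ are rational multiples of one another.

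For the ``rational ratios of moduli'' part specifically, I would argue directly from the structure of $\SL(X,\om)$: the stabilizer of the horizontal direction inside $\SL(X,\om)$ contains a parabolic $P$, and $P$ acts on the cylinder $C_i$ by Dehn-twisting $n_i$ times where $n_i = \mu / m_i$ (shear of the parabolic divided by the modulus), and each $n_i$ must be a positive integer because $P$ is a genuine mapping class preserving the cylinder decomposition. Therefore $m_i / m_j = n_j / n_i \in \Q$ for all $i, j$. I would also remark that this parabolic generates (up to finite index) the full horizontal stabilizer, so there is no ambiguity.

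The main obstacle, and the step deserving the most care, is Step 1: showing that a saddle connection in a direction forces the corresponding horocycle orbit to be non-divergent in the stratum. The clean way is to invoke Masur's criterion (if the vertical flow is non-uniquely-ergodic, or more precisely if the direction is not periodic, then the Teichm\"uller geodesic in that direction is divergent) together with the fact that on a Veech surface the geodesic cannot be divergent since the orbit is closed and the geodesic projects into a closed (hence, for the purposes of recurrence, well-behaved) Teichm\"uller curve $\H \to \cM_g$ on which geodesics are recurrent by finiteness of the hyperbolic area. The subtlety is correctly handling the case of a \emph{reducible} periodic direction versus a genuinely aperiodic direction with a saddle connection: one must rule out the possibility that a saddle connection exists but its direction has a minimal non-closed component. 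Here the finiteness of $\SL(2,\R)/\SL(X,\om)$ and the classification of $\SL(2,\R)$-orbit closures for lattices (every point is either periodic or generic for the horocycle flow) does the job, so the argument is ultimately soft once the dynamical dictionary is in place.
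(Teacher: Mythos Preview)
The paper does not give its own proof of this statement; it is quoted as a result of Veech. So there is no paper proof to compare against, and the question is whether your sketch stands on its own.

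Your overall architecture is the standard one and is correct: produce a parabolic in $\SL(X,\om)$ fixing the saddle-connection direction, then read off complete periodicity and rational moduli ratios from the Dehn-twist structure of that parabolic. The problem is in how you produce the parabolic. The correct mechanism runs in the \emph{opposite} direction from what you wrote. A horizontal saddle connection forces the Teichm\"uller geodesic $g_t$ contracting the horizontal direction to \emph{diverge} in the stratum (the saddle connection shrinks to zero length). Properness of the orbit map then forces the corresponding geodesic ray in the finite-area orbifold $\H/\SL(X,\om)$ to go to infinity, i.e.\ into a cusp, and a cusp is exactly a parabolic fixed point. Your claim that ``on a Veech surface the geodesic cannot be divergent \dots\ geodesics are recurrent by finiteness of the hyperbolic area'' is false: finite-area non-compact hyperbolic orbifolds have cusps, geodesics can and do escape into them, and that escape is precisely what hands you the parabolic. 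Your horocycle version is likewise off: the shear $u_t$ preserves the horizontal saddle connection and shrinks nothing, so ``systoles shrink under $u_t$'' is not the mechanism; and in a lattice quotient every $u_t$-orbit is either closed or equidistributed, never divergent, so ``non-divergent $\Rightarrow$ closed'' is a non-sequitur. (A horocycle argument \emph{can} be made to work: the $u_t$-orbit always carries a horizontal saddle connection of a fixed length, a closed condition failed by a generic rotate of $(X,\om)$, so the orbit cannot be dense and is therefore closed. But that is not the argument you gave.)

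Two smaller points. Your Dehn-twist count is inverted: with modulus $m_i = h_i/c_i$, one full twist corresponds to a shear of $1/m_i$, so a shear by $\mu$ performs $\mu\, m_i$ twists on $C_i$, not $\mu/m_i$; the rationality conclusion survives. And the implication ``parabolic $\Rightarrow$ completely periodic'' deserves one line: a power of the parabolic fixes every horizontal separatrix pointwise (its derivative is the identity on horizontal vectors and it eventually fixes each zero), so an infinite separatrix dense in a minimal component would force that power to be the identity on an open set, contradicting its nontrivial derivative.
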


The {\em trace field} of a Veech surface $(X,\om)$ is the field
\be
K = \Q(\Tr(A) : A \in \SL(X,\om)) .
\ee
In general, $K$ is a number field of degree at most $g$. When equality holds, we say that $(X,\om)$ is {\em algebraically primitive}, and similarly for the Teichm\"{u}ller curve generated by $(X,\om)$.

Teichm\"{u}ller curves are studied from the perspective of algebraic geometry in \cite{Mol:RM} and \cite{Mol:torsion}. It is shown that the associated Jacobians have a factor with extra endomorphisms, and in which the difference of two zeros of the underlying holomorphic $1$-forms is torsion. In the algebraically primitive case, this factor is the entire Jacobian.

\begin{thm} (Theorem 2.7 in \cite{Mol:RM}, Theorem 3.3 in \cite{Mol:torsion}) \label{thm:RMtorsion}
Suppose $(X,\om)$ is an algebraically primitive Veech surface with trace field $K$.
\begin{enumerate}
    \item The $\GL^+(2,\R)$-orbit of $(X,\om)$ lies in the locus of eigenforms for real multiplication by $K$.
    \item The difference of any two zeros of $\om$ is torsion in the Jacobian of $X$.
\end{enumerate}
\end{thm}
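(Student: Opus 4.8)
\emph{Proof proposal.} This is Möller's theorem, and the plan is to reconstruct his argument via the variation of Hodge structure over the Teichm\"{u}ller curve. Let $C = \H/\Gam$ be the curve generated by $(X,\om)$, let $f : \mc{X} \to C$ be the universal family of curves, and let $\mb{V} = R^1 f_* \Q$, equipped with its Gauss--Manin connection, Hodge filtration, and polarization. The $\GL^+(2,\R)$-action equips $C$ with a canonical real sub-local-system $\mb{L} \subset \mb{V} \ot_\Q \R$ of rank $2$: its flat sections are spanned by the real and imaginary parts of the periods of the tautological family of $1$-forms, the monodromy acts on $\mb{L}$ through the tautological inclusion $\Gam = \SL(X,\om) \subset \SL(2,\R)$, and $\mb{L}$ underlies a sub-VHS with $\mb{L}^{1,0}$ spanned fiberwise by $\om$. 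First I would apply Deligne's semisimplicity theorem to split $\mb{V}$ into $\Q$-isotypical sub-VHS, and take $\mb{M}$ to be the smallest $\Q$-sub-local-system with $\mb{L} \subseteq \mb{M} \ot_\Q \R$. Since the traces $\Tr(A)$ for $A \in \Gam$ of the monodromy on $\mb{L}$ generate the trace field $K$ by definition, a standard argument with the isotypical/Galois-conjugate decomposition of $\mb{M}$ shows that the totally real field $K$ embeds in the endomorphism algebra of the polarized VHS $\mb{M}$, with the Galois conjugates of $\mb{L}$ as its eigenspaces; as $K$ admits no nonzero $\Q$-symplectic representation of dimension below $2[K:\Q]$, the polarization forces $\operatorname{rank}_\Q \mb{M} \geq 2g$. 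When $(X,\om)$ is algebraically primitive, $[K:\Q] = g = \tfrac12 \operatorname{rank}_\Q \mb{V}$, so $\mb{M} = \mb{V}$; hence the whole Jacobian admits real multiplication by $K$, and since $\om$ spans the eigenspace $\mb{L}^{1,0}$ it is a $K$-eigenform. The $\GL^+(2,\R)$-action alters only $\mb{L}$, not the $\Q$-local system $\mb{V}$ or its decomposition, so every point of the orbit lies in the eigenform locus, proving (1).

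For (2), the two distinct zeros of $\om$ give disjoint sections $s_1, s_2 : C \to \mc{X}$, and $\del = [s_1] - [s_2]$ defines a section of the relative Jacobian $\mc{J} \to C$, i.e.\ an element of the Mordell--Weil group $\mc{J}(\ol{C})$ over the function field of the compactification of $C$, which is finitely generated modulo torsion by the Lang--N\'{e}ron theorem. I would show $\del$ is torsion by analyzing its associated normal function, namely the extension of integral variations
\[
0 \to \mb{V}_\Z \to \mb{E} \to \Z \to 0
\]
arising from the relative cohomology of the universal curve with respect to $\{s_1, s_2\}$. The crucial input is the ``periodic point'' behavior of the zeros: because the affine group of $(X,\om)$ permutes the saddle connections joining the two zeros, the relative cohomology class joining $s_1$ and $s_2$ is invariant under the affine deformations, so $\del$ is a \emph{flat} section of $\mc{J} \to C$. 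A flat section of a family of abelian varieties over a curve whose underlying local system has no nonzero monodromy invariants must be torsion, and here the monodromy of $\mb{V}$ has no invariants by part (1) (e.g.\ since it contains the nontrivial group $\Gam$ acting irreducibly on the tautological factor and, more to the point, the period map is nonconstant on each eigenpiece). Running this on each Galois-conjugate eigenpiece and using finiteness of $\mc{J}(\ol{C})$ gives that $\del$ itself is torsion.

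The main obstacle is part (2): translating the flat-geometric fact that the zeros are periodic points of the affine group into the Hodge-theoretic statement that the Abel--Jacobi section $\del$ is flat, and then cleanly deducing torsion. This requires (a) identifying the correct normal function and verifying its flatness is $\Gam$-equivariant, hence descends to $C$; (b) controlling the degeneration of $\mb{E}$ at the cusps of $C$, so that the ``flat $\Rightarrow$ torsion'' principle applies to a family over a noncompact base; and (c) using the real multiplication of part (1) to rule out monodromy invariants and organize the eigenpiece bookkeeping. Part (1), by contrast, is essentially formal once the tautological sub-local-system $\mb{L}$ and its Galois conjugates are in hand: it is a direct consequence of Deligne semisimplicity together with the rigidity of polarized $\Q$-VHS carrying real multiplication.
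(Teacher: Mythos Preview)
The paper does not supply a proof of Theorem~\ref{thm:RMtorsion}: it is quoted verbatim from M\"oller's papers \cite{Mol:RM} and \cite{Mol:torsion} and used as a black box, so there is no ``paper's own proof'' to compare your proposal against. Your outline is a faithful sketch of M\"oller's original arguments, and part~(1) is essentially correct as you describe it---Deligne semisimplicity plus the trace-field bookkeeping on the tautological rank-$2$ sub-VHS is exactly the mechanism.

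For part~(2), your strategy (normal function, flatness, then a Manin-kernel-type ``no fixed part implies torsion'' argument) is the right framework, but your justification for flatness is too loose. Saying ``the affine group permutes the saddle connections, so the relative class is invariant'' conflates two different things: the affine group acts linearly on \emph{all} relative periods of $\om$ through the tautological $\SL(2,\R)$-representation, and what one actually needs is that the relative period $\int_{Z_1}^{Z_2}\om$ lies in the $\R$-span of the absolute periods of $\om$ (and likewise for each Galois conjugate). M\"oller extracts this from the fact that along the Teichm\"uller disk the relative and absolute periods of $\om$ transform identically under $\GL^+(2,\R)$, so the projection of the Abel--Jacobi section to each $K$-eigenfactor of the Jacobian is locally constant; the real multiplication from part~(1) is then used to ensure each eigenfactor has no monodromy-invariant vectors, forcing torsion. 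Your items~(a)--(c) correctly identify where the work lies, but the flatness input should be phrased in terms of the period-coordinate description of the Teichm\"uller curve rather than a permutation action on saddle connections.
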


In \cite{Mol:finiteness}, degeneration arguments and the real multiplication and torsion conditions in Theorem \ref{thm:RMtorsion} are used to constrain the periodic directions on an algebraically primitive Veech surface $(X,\om)$ in the case where $(X,\om) \in \Om\cM_g(g-1,g-1)^{\rm hyp}$. We briefly review these arguments from \cite{Mol:finiteness} below.

Since $\om$ has exactly $2$ zeros $Z_1,Z_2$, a periodic direction is irreducible if and only if there is a saddle connection in that direction with distinct endpoints. Any holomorphic $1$-form with more than $1$ zero has a saddle connection with distinct endpoints, so by Theorem \ref{thm:Veech}, $(X,\om)$ has an irreducible periodic direction. By rotating, we may assume that the horizontal direction is periodic and irreducible. Since $(X,\omega)$ is algebraically primitive, there are exactly $g$ horizontal cylinders $C_1,\dots,C_g$. Let $c_j$ be the circumference of $C_j$. The hyperelliptic involution $\tau : X \ra X$ fixes each horizontal cylinder, so each horizontal cylinder contains exactly two Weierstrass points in its interior. Since there are $2g+2$ Weierstrass points in total, $\tau$ must fix exactly $2$ horizontal saddle connections, and the remaining $2$ Weierstrass points are the midpoints of these saddle connections. The remaining $2g-2$ horizontal saddle connections are exchanged in pairs by $\tau$.

Now apply the diagonal matrices $a_s = {\rm diag}(1,s) \subset \GL^+(2,\R)$ to $(X,\om)$. As $s \ra +\infty$, the holomorphic $1$-forms $a_s(X,\omega)$ converge to a stable form $\omega_\infty$ on a rational curve $X_\infty$ with $g$ nodes. Geometrically, $(X_\infty,\omega_\infty)$ is obtained from $(X,\omega)$ by removing a closed geodesic from each horizontal cylinder and replacing the resulting open cylinders with half-infinite cylinders. The zeros of $a_s(X,\omega)$ limit to a pair of zeros of $\omega_\infty$ of order $g-1$ at non-singular points on $X_\infty$. For each horizontal cylinder, the $2$ Weierstrass points in the interior of that cylinder collide to form a node on $X_\infty$. The remaining two Weierstrass points limit to a pair of non-singular points on $X_\infty$ distinct from the zeros of $\omega_\infty$. The stable form $\omega_\infty$ has simple poles at the nodes with opposite residues $\pm c_j /2 \pi i$ on the two branches. After lifting to the normalization $\wt{X}_\infty \cong \P^1(\C)$ and applying an automorphism, we can arrange that the zeros of $\wt{\omega}_\infty$ are at $0$ and $\infty$, the hyperelliptic involutions on $a_s(X,\omega)$ limit to $z \mapsto 1/z$, and the two Weierstrass points that remained distinct on $X_\infty$ lift to $\pm 1$. Then the other $2g$ Weierstrass points lift to pairs $x_j^{\pm 1}$ lying above each node. By Theorem \ref{thm:RMtorsion}, the difference of the zeros $Z_2 - Z_1$ on $a_s(X,\om)$ is a torsion divisor, so let $N$ be the torsion order. The torsion order is preserved in the limit (see Lemma 8.3 in \cite{BHM:finiteness}), so there is a meromorphic function on $\wt{X}_\infty$ that factors through $X_\infty$ and has a zero of order $N$ at $0$ and a pole of order $N$ at $\infty$. Up to a constant factor, this function is given by $z \mapsto z^N$. Since $x_j^{\pm 1}$ are identified to form the nodes of $X_\infty$, we must have $x_j^N = x_j^{-N}$, thus the $x_j$ are $2N$-th roots of unity.

Using the residues of $\wt{\om}_\infty$ at its poles, and using the locations of its zeros and poles, we can express the meromorphic $1$-form $\wt{\om}_\infty$ in two ways as
\begin{equation} \label{eq:infty}
\wt{\omega}_\infty = \frac{1}{2 \pi i}\sum_{j=1}^g \left(\frac{c_j}{z - x_j} - \frac{c_j}{z - x_j^{-1}} \right) dz = \frac{A z^{g-1} dz}{\prod_{j=1}^g (z - x_j)(z - x_j^{-1})}
\end{equation}
where $A \in \R$ since $c_j \in \R$. By clearing denominators and comparing coefficients of each power of $z$, after some algebra one arrives an explicit system of equations in the circumferences $c_j$ and the roots of unity $x_j$.

\begin{thm} (Section 3 and (Eq:$e^{\pr\pr}$) in \cite{Mol:finiteness}) \label{thm:res}
Let $(X,\omega) \in \Omega\mc{M}_g(g-1,g-1)^{\rm hyp}$ be an algebraically primitive Veech surface with trace field $K$ such that the horizontal direction is periodic and irreducible, and let $c_1,\dots,c_g$ be the circumferences of the $g$ horizontal cylinders. There exist roots of unity $x_1,\dots,x_g \in \C$ satisfying
\begin{align*}
\sum_{j=1}^g c_j (x_j^r - x_j^{-r}) = 0, \quad r = 1,\dots,g-1 .
\end{align*}
\noindent
Moreover, the roots of unity $\pm 1, x_1^{\pm 1}, \dots, x_g^{\pm 1}$ are distinct, and the tuple of circumferences $(c_1, \dots, c_g)$ is a real multiple of a $\Q$-basis for $K \subset \Q(x_1,\dots,x_g)$.
\end{thm}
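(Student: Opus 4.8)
The plan is to make rigorous the degeneration argument sketched above, to read off the displayed relations by a short residue computation, and then to pin down the arithmetic of the $c_j$ using real multiplication. After rotating, assume the horizontal direction is periodic and irreducible, so by algebraic primitivity there are exactly $g$ horizontal cylinders $C_1,\dots,C_g$ with core curves $\gam_1,\dots,\gam_g$ and circumferences $c_j=\int_{\gam_j}\om$. Following the sketch, I would first establish that $a_s(X,\om)$ converges as $s\to+\infty$ to a stable form $\wt\om_\infty$ on $\wt X_\infty\cong\P^1(\C)$, the normalization of the irreducible $g$-nodal rational curve obtained by pinching the $\gam_j$; that $\wt\om_\infty$ has zeros of order exactly $g-1$ at the (persisting) zeros of $\om$ and simple poles with residues $\pm c_j/2\pi i$ at the two points lying over the node produced by $\gam_j$ -- the residue being computed from $\int_{\gam_j}a_s\om=c_j$ and the residue theorem on the limiting half-infinite cylinders; and that the hyperelliptic involutions converge to an involution $\tau_\infty$ of $\wt X_\infty$ that swaps the two zeros of $\wt\om_\infty$ and swaps the two branches at each node. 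After normalizing so that the zeros are $0,\infty$ and $\tau_\infty$ is $z\mapsto 1/z$, its fixed points $\pm1$ are the limits of the midpoints of the two $\tau$-invariant horizontal saddle connections, and the two poles over the $j$-th node are of the form $x_j,x_j^{-1}$. A divisor-degree count on $\P^1$ (which carries no nonzero holomorphic $1$-form) shows $\wt\om_\infty$ has no further zeros or poles, and together with the residue computation this gives both expressions for $\wt\om_\infty$ in \eqref{eq:infty}. Finally $\pm1,x_1^{\pm1},\dots,x_g^{\pm1}$ are distinct: the $x_j^{\pm1}$ are the $2g$ normalization preimages of the $g$ ordinary nodes, hence pairwise distinct and $\ne\pm1$, while $\pm1$ are smooth points of $X_\infty$ distinct from the nodes and from $0,\infty$.

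By Theorem \ref{thm:RMtorsion}(2), $Z_2-Z_1$ is torsion of some order $N$ in the Jacobian of $X$; the torsion order is preserved in the limit (Lemma 8.3 in \cite{BHM:finiteness}), so there is a meromorphic function on $X_\infty$ whose pullback to $\P^1$ has divisor $N([0]-[\infty])$, i.e.\ equals a constant times $z^{N}$, and for it to descend through the gluing $x_j\sim x_j^{-1}$ we need $x_j^{N}=x_j^{-N}$; thus each $x_j$ is a $2N$-th root of unity. To extract the relations I would compare the two sides of \eqref{eq:infty} near $z=0$: the product form vanishes there to order exactly $g-1$, whereas expanding each summand of the partial-fraction form as a geometric series about $z=0$ gives $2\pi i\,\wt\om_\infty=\Bigl(\sum_{r\ge 0}\bigl(\sum_{j=1}^{g}c_j(x_j^{\,r+1}-x_j^{-r-1})\bigr)z^{r}\Bigr)\,dz$, so the coefficients of $z^{0},\dots,z^{g-2}$ vanish, which is exactly $\sum_{j=1}^{g}c_j(x_j^{r}-x_j^{-r})=0$ for $r=1,\dots,g-1$ (the vanishing to order $g-1$ at $\infty$ yields the same relations via the antisymmetry of $\wt\om_\infty$ under $z\mapsto1/z$).

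For the last assertion, put $F:=\Q(x_1,\dots,x_g)$, a cyclotomic field. By Theorem \ref{thm:RMtorsion}(1), $\om$ is an eigenform for real multiplication by $K$; let $\phi\colon K\hra\R$ be the associated embedding. The $\gam_j$ span $V:=\im(P-\Id)$ for the Thurston--Veech parabolic $P\in\SL(X,\om)$ fixing the horizontal direction; $V$ is invariant under the action of $K$ (which commutes with $P$), isotropic for the intersection form (parallel core curves are disjoint), and nonzero (the $\gam_j$ pair nontrivially with $\om$). Since $H_1(X;\Q)$ is $2$-dimensional over $K$ -- here algebraic primitivity enters -- a nonzero isotropic $K$-subspace is a $K$-line, so $\dim_\Q V=g$, the $\gam_j$ form a $\Q$-basis of $V=K\gam_1$, and writing $\gam_j=k_j\gam_1$ with $\{k_j\}$ a $\Q$-basis of $K$ one gets $c_j=\int_{\gam_j}\om=\phi(k_j)\,c_1$. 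Hence $(c_1,\dots,c_g)=c_1\cdot(\phi(k_1),\dots,\phi(k_g))$ is a real multiple of a $\Q$-basis of the field $\phi(K)$ (equivalently, up to scale $(c_1,\dots,c_g)$ is the trace-pairing dual of the tuple of cylinder heights, as in \cite{BM:real}). Moreover the relations above are a homogeneous $F$-linear system of $g-1$ equations in the $g$ unknowns $c_j$; distinctness of $\pm1,x_1^{\pm1},\dots,x_g^{\pm1}$ makes its matrix have rank $g-1$ (deleting one column leaves a nonzero Vandermonde-type determinant, nonvanishing because each $x_j\ne\pm1$ and the $x_j+x_j^{-1}$ are distinct), so the solution space is a line defined over $F$; therefore $c_j/c_1=\phi(k_j)\in F$ for all $j$, i.e.\ $\phi(K)\subseteq F=\Q(x_1,\dots,x_g)$.

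I expect the main obstacle to be the degeneration analysis of the first step: establishing convergence in the correct compactification, checking that the Weierstrass points collide exactly so that $X_\infty$ has $g$ ordinary nodes, computing the residues, and fixing the normalization -- this is where the hyperelliptic hypothesis and the combinatorics of an irreducible periodic direction genuinely enter, and it can be organized as in \cite{Mol:finiteness} building on \cite{Mol:RM}, \cite{Mol:torsion}, \cite{BHM:finiteness}. The other input that is not purely formal is that the core curves span a $K$-line, which rests on the eigenform property together with the structure of completely periodic directions on algebraically primitive Veech surfaces (cf.\ \cite{BM:real}). The remaining steps -- the torsion constraint forcing roots of unity, the Taylor-coefficient comparison, and the final linear algebra -- are routine.
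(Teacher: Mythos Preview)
Your proposal is correct and follows essentially the same route as the paper's own sketch (which in turn defers to \cite{Mol:finiteness}): degenerate to a stable form on a nodal rational curve, normalize, use the torsion condition to force the $x_j$ to be roots of unity, and compare the two expressions in \eqref{eq:infty} --- your Taylor expansion near $z=0$ is equivalent to the paper's ``clearing denominators and comparing coefficients of each power of $z$.'' Your arguments for the $\Q$-basis property (via the $K$-line of core curves) and for the containment $K\subset\Q(x_1,\dots,x_g)$ (via the rank-$(g-1)$ Vandermonde-type system) are more explicit versions of what the paper records in the paragraph immediately following the theorem.
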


The containment $K \subset \Q(x_1,\dots,x_g)$ follows from the invertibility of the matrix $M_{ij} = x_j^{i-1} - x_j^{-i+1}$, $1 \leq i,j \leq g$, since $\pm 1, x_1^{\pm 1}, \dots, x_g^{\pm 1}$ are distinct. In particular, the circumferences $c_j$ are determined by their sum and by the $x_j$. The circumferences $c_j$ form a $\Q$-basis for $K$ since $(X,\om)$ is algebraically primitive. Another consequence of algebraic primitivity is that the tuple of heights is determined up to a real scalar by the tuple of circumferences. The {\em trace pairing} $K \times K \ra \Q$ is defined by $(a,b) \mapsto \Tr_{K/\Q}(ab)$.

\begin{thm} (Lemma 10.4 in \cite{BM:real}) \label{thm:ratpos}
Let $(X,\omega) \in \Omega\mc{M}_g$ be an algebraically primitive Veech surface with trace field $K$ such that the horizontal direction is periodic and irreducible. Let $c_1,\dots,c_g$ be the circumferences of the $g$ horizontal cylinders, and let $h_1,\dots,h_g$ be the corresponding heights. Then $(c_1,\dots,c_g)$ is a real multiple of a $\Q$-basis for $K$, and $(h_1,\dots,h_g)$ is a real multiple of the dual basis with respect to the trace pairing. Moreover, the ratios $h_j c_k / c_j h_k$ are rational and positive.
\end{thm}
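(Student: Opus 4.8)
The plan is to recognize the horizontal circumferences and heights as the images, under the trace-field embedding, of a pair of dual bases attached to a single rank-one module structure on a Lagrangian subspace of $H_1(X;\Q)$. The last assertion is immediate: the horizontal direction is periodic and is a saddle-connection direction, so by Theorem~\ref{thm:Veech} the moduli $m_j=h_j/c_j$ of the cylinders have pairwise rational ratios, and since $c_j,h_j>0$ the ratios $h_jc_k/(c_jh_k)=m_j/m_k$ are rational and positive. For the main statement I would begin with a parabolic $P_h\in\SL(X,\om)$ fixing the horizontal direction (standard for Veech surfaces; see \cite{Vee:Teich}), realized by an affine multi-twist in the horizontal cylinders $C_1,\dots,C_g$. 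On first homology $P_{h,\ast}$ acts by $\del\mapsto\del+\sum_j a_j\lr{\del,\gam_j}\gam_j$, where $\gam_j$ is the core curve of $C_j$, $\lr{\cdot,\cdot}$ is the intersection pairing, and $a_j\in\Z\setminus\{0\}$. Put $N=P_{h,\ast}-\id$ and let $W=N\bigl(H_1(X;\Q)\bigr)$, a subspace of the $\Q$-span of $\gam_1,\dots,\gam_g$.

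The key step is to show that $W$ is a free rank-one $K$-module. By Theorem~\ref{thm:RMtorsion}(1), $\mathrm{Jac}(X)$ carries real multiplication by $K$, self-adjoint for the intersection pairing, with $\om$ an eigenform for the embedding $\iota\colon K\hra\R$ realizing $K$ as the trace field. This real multiplication extends to an endomorphism of the family of Jacobians over the Teichm\"uller curve, hence is monodromy-invariant, so $P_{h,\ast}$ (and therefore $N$) commutes with the $K$-action on $H_1(X;\Q)$; thus $W$ is $K$-stable. Now $W\ne0$ (as $P_{h,\ast}\ne\id$), $\dim_\Q W\le g$ (it lies in the span of $g$ core curves), and $\dim_\Q W$ is a multiple of $[K:\Q]=g$ (being a nonzero module over the field $K$); hence $\dim_\Q W=g$, $W$ equals the $\Q$-span of $\gam_1,\dots,\gam_g$ (so these are linearly independent and form a $\Q$-basis of $W$), $W$ is isotropic (the $\gam_j$ are pairwise disjoint) hence Lagrangian, and $W\cong K$ as a $K$-module. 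Fixing a $K$-generator $w_0$ of $W$, I would write $\gam_j=\mu_j w_0$ with $\mu_j\in K$; then $\{\mu_1,\dots,\mu_g\}$ is a $\Q$-basis of $K$.

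The circumferences then come from the eigenform relation: the functional $v\mapsto\int_v\om$ on $W$ is $\R$-valued (the $\gam_j$ are horizontal) and satisfies $\int_{\lam_\ast v}\om=\iota(\lam)\int_v\om$ for $\lam\in K$, so under $W\cong K$ it is $\mu\mapsto t_0\,\iota(\mu)$ with $t_0=\int_{w_0}\om\in\R$; orienting the $\gam_j$ (equivalently, rescaling the $\mu_j$ by signs) we get $c_j=\int_{\gam_j}\om=t_0\,\iota(\mu_j)$, so $(c_1,\dots,c_g)$ is the real multiple $t_0$ of the $\Q$-basis $\bigl(\iota(\mu_j)\bigr)_j$ of $K\subset\R$. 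For the heights, $[\im\om]\in H^1(X;\R)$ is likewise an $\iota$-eigenclass, and $\int_{\gam_j}\im\om=0$ for all $j$; hence its Poincar\'e dual $v_h$ (defined by $\lr{v_h,\del}=\int_\del\im\om$) is orthogonal to every $\gam_j$ and so lies in $W\ot_\Q\R$ (which is Lagrangian, hence its own orthogonal), while unwinding a loop through the cylinder decomposition identifies $v_h$, up to sign, with $\sum_j h_j\gam_j$. Self-adjointness of the real multiplication makes Poincar\'e duality $K$-equivariant, so $v_h$ is an $\iota$-eigenvector in $W\ot_\Q\R\cong K\ot_\Q\R$; such a vector lies in the line spanned by the idempotent $e_\iota\in K\ot_\Q\R$ cutting out the $\iota$-factor, and since $e_\iota$ is characterized by $\Tr_{K/\Q}(e_\iota a)=\iota(a)$, its expansion in the basis $\{\mu_j\}$ has coefficients $\iota(\mu_j^{\ast})$, where $\{\mu_j^{\ast}\}$ is the trace-dual basis of $\{\mu_j\}$. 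Comparing with $v_h=\sum_j h_j\mu_j w_0$ gives $h_j=s_0\,\iota(\mu_j^{\ast})$ for some $s_0\in\R$; since $\iota$ preserves the trace pairing, $\bigl(\iota(\mu_j^{\ast})\bigr)_j$ is the trace-dual of $\bigl(\iota(\mu_j)\bigr)_j$, which is the claim.

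I expect the main obstacle to be the structural input rather than the linear algebra: one must know that the monodromy of the Teichm\"uller curve commutes with the real multiplication (so that $W$ is $K$-stable) and that the real multiplication is self-adjoint for the intersection pairing (so that Poincar\'e duality is $K$-equivariant). Both follow from the theory behind Theorem~\ref{thm:RMtorsion}, but the argument rests essentially on them. One should also keep careful track of orientations and signs---the orientations of the $\gam_j$, the sign convention for the intersection pairing, and the normalization relating the twisting numbers $a_j$ to the moduli $m_j$---although none of these choices affects the final conclusion.
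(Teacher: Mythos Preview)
The paper does not give a proof of this result; it is quoted from \cite{BM:real}, with only the remark that the final assertion (rationality and positivity of $h_jc_k/c_jh_k$) is already contained in Theorem~\ref{thm:Veech}---precisely as you observe. So there is no ``paper's own proof'' to compare against beyond that one sentence.

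Your sketch for the main statement is a correct outline of the standard argument, and is essentially the approach taken in \cite{BM:real}: the image of the horizontal parabolic on $H_1(X;\Q)$ lies in the span of the core curves, is $K$-stable by monodromy-invariance of the real multiplication, and is therefore a $g$-dimensional Lagrangian and a free rank-one $K$-module; the eigenform property then reads off the circumferences as a real multiple of a $\Q$-basis of $K$, while self-adjointness makes Poincar\'e duality $K$-equivariant, so the dual of $[\im\om]$ is an $\iota$-eigenvector in this Lagrangian and its coefficients in the core-curve basis---the heights---form a real multiple of the trace-dual basis. The two structural inputs you correctly flag as essential (monodromy-invariance and self-adjointness of the real multiplication) are exactly the content borrowed from M\"oller's work underlying Theorem~\ref{thm:RMtorsion}. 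The only place I would tighten the exposition is the identification $v_h=\sum_j h_j\gam_j$: this is cleanest checked by pairing both sides against cycles crossing a single cylinder once (which, together with the $\gam_j$, span $H_1$), rather than by the informal ``unwinding a loop'' phrase.
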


The last statement is part of Theorem \ref{thm:Veech}. Circumferences arising from solutions to the equations in Theorem \ref{thm:res} do not necessarily satisfy the constraints in Theorem \ref{thm:ratpos}, and in practice this rules out the vast majority of solutions from potentially arising from algebraically primitive Veech surfaces.

We now present a flat-geometric argument that may be of independent interest, which uniquely determines the combinatorial type of the cylinder decomposition in an irreducible periodic direction of an algebraically primitive Veech surface in $\Om\cM_g(g-1,g-1)^{\rm hyp}$. The argument uses the equation for the stable form in (\ref{eq:infty}) to produce an orientation-reversing symmetry of the associated Veech surfaces.

The {\em cylinder digraph} $\Gam(X,\om)$ is the directed graph with a vertex for each horizontal cylinder $C$, and an edge from $C_1$ to $C_2$ for each horizontal saddle connection in the intersection of the top boundary of $C_1$ with the bottom boundary of $C_2$. We say that a directed graph is a {\em chain} if the underlying undirected graph is connected and each vertex has at most $2$ adjacent vertices. (There may be multiple edges between two adjacent vertices.) In the case where $(X,\om) \in \Om\cM_g(g-1,g-1)^{\rm hyp}$, the hyperelliptic involution $\tau$ fixes each horizontal cylinder $C$ and exchanges the top and bottom boundaries of $C$. Thus, if $C_1 \neq C_2$, the number of horizontal saddle connections in $\ol{C_1} \cap \ol{C_2}$ is even, and these saddle connections are exchanged in pairs by $\tau$. Also, recall from the discussion below Theorem \ref{thm:RMtorsion} that there are exactly $2$ horizontal saddle connections fixed by $\tau$.

\begin{lem} \label{lem:hypirred}
Let $(X,\omega) \in \Omega\mc{M}_g (g-1,g-1)^{\rm hyp}$ be an algebraically primitive Veech surface such that the horizontal direction is periodic and irreducible. Then each horizontal cylinder has exactly $2$ saddle connections in its top boundary and exactly $2$ saddle connections in its bottom boundary. Moreover, the cylinder digraph $\Gam(X,\om)$ is a chain.
\end{lem}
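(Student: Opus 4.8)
The plan is to use the limiting stable form in (\ref{eq:infty}) to construct an orientation-reversing affine symmetry of $(X,\om)$, and then to deduce the combinatorial structure of the cylinder decomposition from the fact that such a symmetry must permute the cylinders in a very constrained way. First I would observe that, by Theorem \ref{thm:res}, the circumferences $c_j$ are the coefficients in the partial fraction expansion (\ref{eq:infty}) of the stable form $\wt\om_\infty$ on $\P^1$, whose zeros are at $0,\infty$ and whose poles are at the $2g$ roots of unity $x_j^{\pm 1}$, together with the marked Weierstrass points $\pm 1$. The key point is that the configuration $\{0,\infty,\pm 1, x_1^{\pm 1},\dots,x_g^{\pm 1}\}$ on $\P^1$ is invariant under complex conjugation $z \mapsto \bar z$ (because the $x_j$ come in the inverse-conjugate pairs forced by $|x_j|=1$, i.e. $\overline{x_j} = x_j^{-1}$), and $\wt\om_\infty$ is conjugation-anti-invariant since $A$ and the $c_j$ are real. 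Reversing the limiting degeneration $a_s$ as $s \to +\infty$, this symmetry of the stable curve should promote to an orientation-reversing element of the Veech group of $(X,\om)$, or more precisely to an affine diffeomorphism $\phi$ of $(X,\om)$ covering this conjugation, which preserves the horizontal foliation, preserves each horizontal cylinder setwise (as $\tau$ does), but reverses the cyclic order of the cylinders along the surface and reflects each cylinder across a horizontal line.

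With the symmetry $\phi$ in hand, the combinatorial conclusion should follow by a counting argument on the cylinder digraph $\Gam(X,\om)$. The digraph has $g$ vertices, and I would count edges: by the remarks preceding the lemma, exactly $2$ horizontal saddle connections are fixed by $\tau$ and the remaining $2g-2$ are exchanged in pairs, so there are $g-1$ pairs plus $2$ fixed saddle connections, giving $g+1$ "$\tau$-orbits" of saddle connections but $2g$ saddle connections total (each pair contributing $2$, each fixed one contributing $1$: $2(g-1)+2 = 2g$). Each saddle connection contributes one directed edge (from the cylinder above it to the one below it), so $\Gam$ has $2g$ directed edges distributed among $g$ vertices. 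Since $\tau$ exchanges the top and bottom boundary of each cylinder, the out-degree of each vertex equals its in-degree, so the total boundary length (counted in saddle connections) of each cylinder's top equals that of its bottom; summing, the average number of saddle connections on the top boundary of a cylinder is $2g/g = 2$. To upgrade "average $2$" to "exactly $2$ for every cylinder," I would use irreducibility together with the symmetry $\phi$: irreducibility means the union of all horizontal saddle connections is connected, which forces $\Gam(X,\om)$ to be connected; and connectedness of a graph with $g$ vertices and (in the undirected sense) $g$ edge-orbits that is invariant under the reflection $\phi$ — combined with the local constraint that no cylinder can have only $1$ saddle connection on a boundary component without creating a disconnection or violating the eigenform/torsion structure from Theorem \ref{thm:RMtorsion} — pins down that $\Gam$ is a chain with each vertex of degree exactly $2$ on each side.

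The main obstacle, I expect, will be the rigorous construction of the orientation-reversing symmetry $\phi$: passing from a symmetry of the limiting stable form back to an affine symmetry of $(X,\om)$ itself requires knowing that the degeneration $a_s(X,\om) \to (X_\infty,\om_\infty)$ is, in a suitable sense, reversible and that the conjugation symmetry of $(X_\infty,\om_\infty)$ lifts compatibly with the plumbing data — this is essentially the assertion that $(X,\om)$ has an affine automorphism group containing an orientation-reversing element realizing $z \mapsto \bar z$ on the limit, which should follow from the rigidity of Veech surfaces (the Veech group determines the flat structure) but needs care. An alternative, perhaps cleaner route that avoids this: argue purely combinatorially that any $(X,\om) \in \Om\cM_g(g-1,g-1)^{\rm hyp}$ with an irreducible horizontal direction and exactly $g$ cylinders has each cylinder bounded by exactly $2$ saddle connections on top and bottom, using only the $\tau$-equivariance of $\Gam$, the edge count $2g$, connectedness from irreducibility, and the observation that a cylinder with a single saddle connection on one side would force that saddle connection to be a loop at a single vertex whose removal disconnects $\Gam$ — contradicting either connectedness or the count of $\tau$-fixed saddle connections. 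I would pursue this second route first and fall back on the symmetry argument only if the combinatorics alone proves insufficient.
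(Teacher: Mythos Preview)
Your main idea---use the complex conjugation $z\mapsto\bar z$ on the stable form $\wt\om_\infty$ to produce an orientation-reversing symmetry of the horizontal cylinder decomposition---is exactly the paper's approach. But you have misjudged where the work lies, and the decisive step is missing from your outline.

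First, constructing the symmetry on $(X,\om)$ is much easier than you fear. You do not need to ``reverse the degeneration,'' invoke plumbing data, or appeal to rigidity of Veech groups. The degeneration $a_s(X,\om)\to(X_\infty,\om_\infty)$ only replaces the core of each horizontal cylinder by a pair of half-infinite cylinders; the horizontal saddle connections, their lengths, and their incidences with cylinder boundaries are literally unchanged. So the isometry $z\mapsto\bar z$ of $(\P^1,|\wt\om_\infty|)$ restricts to an isometry of the union of horizontal saddle connections, which is canonically the same set on $(X,\om)$. This then extends across each finite cylinder $C_j$ by the unique reflection swapping its top and bottom boundaries with the prescribed boundary values. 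That is the whole construction of $f$.

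Second, the contradiction you are looking for is not the connectivity/counting argument you sketch, but the following comparison of $f$ with $\tau$. Both $f$ and $\tau$ fix each horizontal cylinder and exchange its top and bottom boundaries; the crucial difference is that $f$ \emph{fixes} the two zeros of $\om$ (because $0$ and $\infty$ are fixed by $z\mapsto\bar z$), while $\tau$ \emph{swaps} them. Now suppose some cylinder $C$ had a single saddle connection $\gamma$ on its top boundary. By $\tau$, its bottom boundary is also a single saddle connection $\gamma'=\tau(\gamma)$, and since each is the full boundary circle of $C$, each is a loop---$\gamma$ at one zero, $\gamma'$ at the other. But $f$ swaps the top and bottom boundaries of $C$, so $f(\gamma)=\gamma'$; since $f$ fixes each zero, it cannot send a loop based at $Z_1$ to a loop based at $Z_2$. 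This contradiction forces every cylinder to have at least two saddle connections on each boundary, and the edge count $2g$ forces exactly two; the chain structure then falls out by walking from the cylinder containing the $\tau$-fixed saddle connection $\gamma_0$ to the one containing $\gamma_g$.

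Finally, your proposed ``purely combinatorial'' fallback (using only $\tau$-equivariance, irreducibility, and the edge count) is unlikely to succeed. Nothing in that list distinguishes loops at $Z_1$ from loops at $Z_2$; one can write down $\tau$-equivariant cylinder digraphs in $\Om\cM_g(g-1,g-1)^{\rm hyp}$ with an irreducible horizontal direction in which some cylinder has a single saddle connection on each side. The input from algebraic primitivity---specifically the torsion condition that forces the poles of $\wt\om_\infty$ to be roots of unity, hence conjugation-symmetric---is what produces $f$, and $f$ is doing real work. (Your description of $\phi$ as ``reversing the cyclic order of the cylinders along the surface'' is also off: $f$ preserves each cylinder setwise; what it reverses is orientation, and hence the cyclic order of separatrices at each fixed zero.)
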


\begin{proof}
Following the discussion below Theorem \ref{thm:RMtorsion}, by applying the diagonal matrices $a_s$ to $(X,\om)$ with $s \ra +\infty$ and normalizing, we obtain a meromorphic $1$-form
\be
\wt{\om}_\infty = \frac{1}{2\pi i}\sum_{j=1}^g \left(\frac{c_j}{z-x_j} - \frac{c_j}{z-x_j^{-1}}\right) dz
\ee
on $\P^1(\C)$ with zeros of order $g-1$ at $0$ and $\infty$, and with simple poles at roots of unity $x_1^{\pm 1}, \dots, x_g^{\pm 1}$, such that $\pm 1, x_1^{\pm 1}, \dots, x_g^{\pm 1}$ are distinct.

Since $c_j \in \R$ and $x_j^{-1} = \ol{x_j}$, complex conjugation $z \mapsto \ol{z}$ defines an orientation-reversing isometry on $\P^1(\C) \sm \{x_1^{\pm 1},\dots,x_g^{\pm 1}\}$ with respect to the metric $|\wt{\omega}_\infty|$. This isometry fixes the zeros $0,\infty$, exchanges the poles in pairs $x_j^{\pm 1}$, and fixes the points $\pm 1$. Each pair of poles $x_j^{\pm 1}$ has a neighborhood contained in a pair of half-infinite horizontal cylinders with the same circumference $c_j$ arising from a single horizontal cylinder $C_j$ on $(X,\om)$. The isometry $z \mapsto \ol{z}$ exchanges these horizontal cylinders, and therefore fixes the horizontal direction for $\wt{\om}_\infty$. Then the set of fixed points $\R \cup \infty$ is a union of horizontal saddle connections for $\wt{\om}_\infty$. Recall that $\wt{\om}_\infty$ is obtained from $(X,\om)$ by removing a closed geodesic from each horizontal cylinder and replacing the resulting open cylinders with half-infinite horizontal cylinders. In particular, the horizontal saddle connections and their lengths are preserved in this process. The two Weierstrass points on $X$ that limit to $\pm 1 \in \P^1(\C)$ lie on two distinct horizontal saddle connections $\gam_0,\gam_g$. Letting $\wt{\gam}_0,\wt{\gam}_g$ be the corresponding horizontal saddle connections for $\wt{\om}_\infty$, we may assume that $1 \in \wt{\gam}_0$ and $-1 \in \wt{\gam}_g$. Since the two zeros of $\wt{\om}_\infty$ are $0,\infty$, the interior of $\wt{\gam}_0$ is the positive part of the real line, and the interior of $\wt{\gam}_g$ is the negative part of the real line.

Passing back to $(X,\om)$, there is an orientation-reversing isometry $f : X \ra X$ with respect to $|\om|$ that fixes each horizontal cylinder while exchanging its top and bottom boundaries, fixes the zeros of $\om$, and fixes the saddle connections $\gam_0$ and $\gam_g$ pointwise. Recall that the hyperelliptic involution $\tau : X \ra X$ fixes each horizontal cylinder while exchanging its top and bottom boundaries, exchanges the two zeros of $\om$, and fixes the saddle connections $\gam_0$ and $\gam_g$. Moreover, $\gam_0$ and $\gam_g$ are the only horizontal saddle connections fixed by $\tau$.

Suppose that $(X,\om)$ has a horizontal cylinder $C$ with only one saddle connection in one of its boundaries. Then by applying $\tau$, we see that each of the top and bottom boundaries of $C$ contains only one saddle connection. Since $\tau$ exchanges the two zeros, these two saddle connections are loops through distinct zeros. Then $f$ also exchanges these two saddle connections, a contradiction since $f$ fixes the zeros of $\om$. Thus, each horizontal cylinder contains at least $2$ saddle connections in each of its top and bottom boundaries. Since each saddle connection is contained in the top boundary of a unique horizontal cylinder and the bottom boundary of a unique horizontal cylinder, we have accounted for at least $2g$ horizontal saddle connections. Since $\om$ has exactly two zeros, each of order $g-1$, there are exactly $2g$ horizontal saddle connections on $(X,\om)$. Thus, each horizontal cylinder contains exactly $2$ saddle connections in its top boundary and exactly $2$ saddle connections in its bottom boundary.

Let $C_1$ be the horizontal cylinder containing $\gam_0$ in both its top and bottom boundaries, and let $C_g$ be the cylinder containing $\gam_g$ in both its top and bottom boundaries. Then $C_1$ is adjacent to exactly one other horizontal cylinder $C_2$, and $\ol{C_1} \cap \ol{C_2}$ consists of exactly $2$ horizontal saddle connections $\gam_1,\gam_1^\pr$ exchanged by $\tau$ and $f$. Then similarly, $C_2$ is adjacent to exactly one horizontal cylinder $C_3$ other than $C_1$, and $\ol{C_2} \cap \ol{C_3}$ consists of exactly $2$ horizontal saddle connections $\gam_2,\gam_2^\pr$ exchanged by $\tau$ and $f$. Continuing in this way, $\ol{C_j} \cap \ol{C_{j+1}}$ consists of exactly $2$ horizontal saddle connections $\gam_j,\gam_j^\pr$ for $j = 1,\dots,g-1$. Thus, the cylinder digraph $\Gam(X,\om)$ is a chain with endpoints $C_1$ and $C_g$.
\end{proof}

\begin{rmk} \label{rmk:distinct}
Lemma \ref{lem:hypirred} implies that for any saddle connection on $(X,\om)$ with distinct endpoints, every saddle connection in that direction has distinct endpoints. Thus, for any saddle connection on $(X,\om)$ that is a loop, every saddle connection in that direction is also a loop.
\end{rmk}

\begin{rmk} \label{rmk:gap}
Lemma \ref{lem:hypirred} provides a flat-geometric perspective on the prototype described in Figure 3.1 in \cite{Mol:finiteness}.
\end{rmk}

Lastly, in the notation of the proof of Lemma \ref{lem:hypirred}, suppose the horizontal cylinder circumferences $c_1,\dots,c_g$ have been fixed, and orient each horizontal saddle connection from left to right. The length of the distinguished horizontal saddle connection $\gam_0$ determines the length of every other horizontal saddle connection and is constrained by the fact that these lengths are positive. Additionally, since $\int_{\gam_0} \om = \int_{\wt{\gam_0}}\wt{\om}_\infty$, by using the expression in (\ref{eq:infty}) we can compute $\int_{\gam_0} \om$ in terms of the circumferences $c_j$ and the roots of unity $x_j$. We record the result of these computations below. For a nonzero real number $t$, let $\sgn(t) = 1$ if $t > 0$ and $\sgn(t) = -1$ if $t < 0$.

\begin{lem} \label{lem:sc}
With notation as in the proof Lemma \ref{lem:hypirred}, the length of the saddle connection $\gam_0$ satisfies $\int_{\gam_0} \om > 0$ and
\be
(-1)^k \int_{\gam_0} \om + \sum_{j=1}^k (-1)^{j-1} c_j > 0
\ee
for $k = 1,\dots,g-1$. Writing $x_j = \zeta_{2N}^{n_j}$, where $\zeta_{2N} = \exp(\pi i / N)$ and $-N < n_j < N$, we have
\be
\int_{\gam_0} \om = \sum_{j=1}^g c_j \left(\sgn(n_j) - \frac{n_j}{N}\right) .
\ee
\end{lem}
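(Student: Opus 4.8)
The plan is to compute $\int_{\gam_0}\om$ by integrating the explicit formula for $\wt{\om}_\infty$ in~\eqref{eq:infty} along the positive real axis $\wt{\gam}_0$, and to derive the inequalities from the positivity of the lengths of the saddle connections $\gam_1,\dots,\gam_{g-1}$. First I would set up the dictionary between the saddle connections of $(X,\om)$ and those of $\wt{\om}_\infty$. By Lemma~\ref{lem:hypirred}, the cylinder digraph is a chain $C_1,\dots,C_g$, with $\gam_0$ bounding $C_1$ above and below, and $\gam_j,\gam_j^\pr$ forming $\ol{C_j}\cap\ol{C_{j+1}}$ for $j=1,\dots,g-1$; on $\wt{\om}_\infty$ the half-infinite cylinder of circumference $c_j$ at $x_j^{\pm 1}$ corresponds to $C_j$, and the saddle connections sit along $\R\cup\{\infty\}$ with $\wt{\gam}_0$ the positive ray and $\wt{\gam}_g$ the negative ray. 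The other saddle connections $\wt{\gam}_j$ lie between consecutive poles encountered as one traverses the real axis. Computing lengths amounts to integrating the real $1$-form $\wt{\om}_\infty$ between consecutive singular points: if $\gam_j$ joins the singular points at positions $p_j<p_{j+1}$ on the line, then its $|\om|$-length is $\left|\int_{p_j}^{p_{j+1}}\wt{\om}_\infty\right|$, and since going around the chain the lengths alternate in the sense that partial sums telescope, the cumulative quantity $(-1)^k\int_{\gam_0}\om+\sum_{j=1}^k(-1)^{j-1}c_j$ is (up to sign) the length of $\gam_k$, hence positive. The positivity $\int_{\gam_0}\om>0$ is immediate since $\gam_0$ is a genuine nondegenerate saddle connection oriented left to right.

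Next I would carry out the residue computation for $\int_{\gam_0}\om=\int_{\wt{\gam}_0}\wt{\om}_\infty$. Using the partial-fractions expression
\be
\wt{\om}_\infty=\frac{1}{2\pi i}\sum_{j=1}^g\left(\frac{c_j}{z-x_j}-\frac{c_j}{z-x_j^{-1}}\right)dz,
\ee
an antiderivative is $\frac{1}{2\pi i}\sum_{j=1}^g c_j\bigl(\log(z-x_j)-\log(z-x_j^{-1})\bigr)$, so $\int_{\wt{\gam}_0}\wt{\om}_\infty$ is a sum of terms $\frac{c_j}{2\pi i}\bigl(\log(z-x_j)-\log(z-x_j^{-1})\bigr)$ evaluated between the endpoints of $\wt{\gam}_0$ on the positive real axis. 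The key point is that traversing $\wt{\gam}_0$ from just after one pole to just before the next, the argument of $z-x_j$ changes by an amount governed by which side of $x_j^{\pm 1}=\zeta_{2N}^{\pm n_j}$ the interval lies; writing $x_j=\zeta_{2N}^{n_j}$ with $-N<n_j<N$, the contribution of the $j$-th pair, after summing the changes in argument along the positive ray and using $x_j^{-1}=\ol{x_j}$, works out to $c_j\bigl(\sgn(n_j)-\tfrac{n_j}{N}\bigr)$. Concretely, the net winding of $z\mapsto z-x_j$ as $z$ runs over $(0,\infty)$ contributes $\sgn(n_j)$ (from which half-plane $x_j$ sits in, i.e.\ whether $\arg x_j\in(0,\pi)$ or $(-\pi,0)$, together with the pairing with $x_j^{-1}$), while the $-n_j/N$ records the angular position $\arg x_j=\pi n_j/N$ relative to the full turn $2\pi$; the $2\pi i$ in the denominator converts these angular changes into the stated rational coefficients. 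I would verify the signs carefully on the $g=3$ case (the Veech $14$-gon) as a sanity check.

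The main obstacle I expect is keeping track of signs and branch choices in the logarithm computation: the value of $\int_{\wt{\gam}_0}\wt{\om}_\infty$ depends on the orientation of $\wt{\gam}_0$ and on the precise position of the positive real axis relative to each pole $x_j^{\pm 1}$ on the unit circle, and it is easy to be off by $\pm c_j$ or by a global sign. The cleanest way to handle this is to observe that $\wt{\om}_\infty$ restricted to $\R$ is a real $1$-form (since the form is real, as noted after~\eqref{eq:infty}), integrate the real part explicitly, and note that as $z$ crosses the projection of a pole $x_j^{\pm 1}$ onto $\R$ the running integral jumps by $\pm c_j$ in a pattern dictated by $\sgn(n_j)$; assembling these jumps over all $g$ cylinders and matching against the length of $\wt{\gam}_0$ (the stretch of $\R$ from $0$ out to the first singular point) yields the formula. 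Once $\int_{\gam_0}\om$ is expressed this way, the inequalities $(-1)^k\int_{\gam_0}\om+\sum_{j=1}^k(-1)^{j-1}c_j>0$ follow by identifying the left-hand side with $\pm(\text{length of }\gam_k)$, using that all saddle-connection lengths are strictly positive, which completes the proof.
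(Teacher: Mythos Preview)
Your approach matches what the paper sketches in the paragraph immediately preceding Lemma~\ref{lem:sc}: the inequalities record that the lengths of $\gam_1,\dots,\gam_{g-1}$ (which telescope as alternating sums in $\int_{\gam_0}\om$ and the $c_j$, via $c_j=|\gam_{j-1}|+|\gam_j|$) are strictly positive, and the closed formula is obtained by evaluating $\int_{\wt\gam_0}\wt\om_\infty$ along the positive real axis using the partial-fractions expression~\eqref{eq:infty}. The paper gives no further detail beyond this outline, so your plan is exactly the intended computation; the only point to tighten is the phrase ``up to sign'' --- you need the alternating sum to equal $+|\gam_k|$, not $\pm|\gam_k|$, for the positivity conclusion to follow.
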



\section{Cylinders in Irreducible Directions} \label{sec:circum}

In this section, we use the constraints from Section \ref{sec:irred}, together with a generalization of Mann's theorem \cite{Man:roots} in \cite{DZ:roots}, to produce a short list of possible cylinder circumferences in an irreducible periodic direction for an algebraically primitive Veech surface in $\Om\cM_3(2,2)^{\rm hyp}$. The following theorem summarizes the results from Section \ref{sec:irred}.

\begin{thm} \label{thm:irred}
Suppose that $(X_0,\om_0) \in \Om\cM_g(g-1,g-1)^{\rm hyp}$ is an algebraically primitive Veech surface. There is $(X,\om) \in \GL^+(2,\R) \cdot (X_0,\om_0)$, such that the horizontal direction is periodic and irreducible, with the following properties.
\begin{enumerate}
    \item There are $g$ horizontal cylinders $C_1,\dots,C_g$ on $(X,\om)$, and the cylinder digraph $\Gam(X,\om)$ is a chain with endpoints $C_1,C_g$.
    \item The circumferences $c_1,\dots,c_g$ and heights $h_1,\dots,h_g$ of $C_1,\dots,C_g$ are $\Q$-bases for a number field $K \subset \Q(x_1,\dots,x_g)$ that are dual with respect to the trace pairing, and the moduli $m_1,\dots,m_g$ have rational ratios.
    \item There exist roots of unity $x_1,\dots,x_g$ such that $\pm 1, x_1^{\pm 1}, \dots, x_g^{\pm g}$ are distinct and
    \be
    \sum_{j=1}^g c_j (x_j^r - x_j^{-r}) = 0, \quad r = 1,\dots,g-1.
    \ee
    \item Writing $x_j = \zeta_{2N}^{n_j}$, where $\zeta_{2N} = \exp(\pi i / N)$ and $-N < n_j < N$, and letting
    \be
    s = \sum_{j=1}^g c_j \left(\sgn(n_j) - \frac{n_j}{N}\right) ,
    \ee
    we have $s > 0$ and
    \be
    (-1)^k s + \sum_{j=1}^k (-1)^{j-1} c_j > 0
    \ee
    for $k = 1,\dots,g$.
\end{enumerate}
\end{thm}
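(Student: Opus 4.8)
The plan is to assemble the results collected in Section~\ref{sec:irred}, after one normalization step, and then to supply the single inequality in part~(4) that is not literally contained in Lemma~\ref{lem:sc}. As observed in Section~\ref{sec:irred} (using Theorem~\ref{thm:Veech} and the fact that a $1$-form with more than one zero carries a saddle connection with distinct endpoints), $(X_0,\om_0)$ has an irreducible periodic direction. Rotating by a suitable element of $\GL^+(2,\R)$ makes this direction horizontal, and rescaling by a positive real (also in $\GL^+(2,\R)$) lets us assume that the circumferences form an honest $\Q$-basis for $K$ rather than merely a real multiple of one; call the resulting surface $(X,\om)$.

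With this $(X,\om)$ in hand, part~(1) is Lemma~\ref{lem:hypirred} together with the fact, recalled in the discussion after Theorem~\ref{thm:RMtorsion}, that algebraic primitivity forces exactly $g$ horizontal cylinders. Part~(3), the containment $K \subset \Q(x_1,\dots,x_g)$, and the statement that the circumferences form a $\Q$-basis for $K$ are exactly Theorem~\ref{thm:res}. For the remainder of part~(2), Theorem~\ref{thm:ratpos} gives that the heights are a positive real multiple of the $\Q$-basis dual to the circumferences for the trace pairing, and that the ratios $h_j c_k/(c_j h_k)$ are rational and positive; since the modulus of $C_j$ is $m_j = h_j/c_j$, this yields $m_j/m_k = h_j c_k/(c_j h_k) \in \Q_{>0}$, so the moduli have rational ratios (which is also part of Theorem~\ref{thm:Veech}).

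It remains to prove part~(4). Write $x_j = \zeta_{2N}^{n_j}$ with $-N < n_j < N$, and let $s$ be as in the statement. Lemma~\ref{lem:sc} gives $s = \int_{\gam_0}\om > 0$ and the inequalities for $k = 1,\dots,g-1$; in the notation of the proof of Lemma~\ref{lem:hypirred}, and using $|\gam_j| = |\gam_j^\pr|$, these are precisely the positivity of the lengths of the horizontal saddle connections $\gam_0,\gam_1,\dots,\gam_{g-1}$. Since the cylinder digraph is a chain, the length of every horizontal saddle connection is an explicit affine function of $\int_{\gam_0}\om$ and of $c_1,\dots,c_g$, and the only positivity constraint not yet used is $|\gam_g| > 0$; computing $\int_{\gam_g}\om$ from~(\ref{eq:infty}) exactly as in the proof of Lemma~\ref{lem:sc}, now using that the interior of $\wt{\gam}_g$ is the negative part of the real axis, rewrites $|\gam_g| > 0$ in the form $(-1)^g s + \sum_{j=1}^g (-1)^{j-1} c_j > 0$, which is the $k = g$ case. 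The theorem is essentially a repackaging of Section~\ref{sec:irred}, so I do not expect a genuine obstacle; the step needing care — and the main thing to get right — is the sign bookkeeping along the chain of saddle connections from Lemma~\ref{lem:hypirred}, so that $|\gam_g| > 0$ emerges in precisely the asserted form, together with checking that the rescaling to an honest $\Q$-basis stays within the $\GL^+(2,\R)$-orbit.
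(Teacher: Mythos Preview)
Your proposal is correct and matches the paper's approach: the paper states Theorem~\ref{thm:irred} as a summary of Section~\ref{sec:irred} without a separate proof, and your argument simply unpacks that summary by citing Theorem~\ref{thm:Veech}, Theorem~\ref{thm:res}, Theorem~\ref{thm:ratpos}, Lemma~\ref{lem:hypirred}, and Lemma~\ref{lem:sc} in turn, together with the obvious $\GL^+(2,\R)$-normalization. Your observation that the $k=g$ inequality in part~(4) is not literally in Lemma~\ref{lem:sc} and must be read off from $|\gam_g|>0$ (using the chain structure and the integral over the negative real axis) is exactly the small additional step needed, and your caveat about sign bookkeeping is well placed.
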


For the rest of this section, we will focus on the case $g = 3$. We need to find all solutions to the equations
\begin{align}
    c_1 (x_1 - x_1^{-1}) + c_2 (x_2 - x_2^{-1}) + c_3 (x_3 - x_3^{-1}) &= 0 \label{eq:res1} \\
    c_1 (x_1^2 - x_1^{-2}) + c_2 (x_2^2 - x_2^{-2}) + c_3 (x_3^2 - x_3^{-2}) &= 0 \label{eq:res2}
\end{align}
where $c_1,c_2,c_3,x_1,x_2,x_3$ satisfy the conditions in Theorem \ref{thm:irred}. We will call such a solution to equations (\ref{eq:res1}) and (\ref{eq:res2}) an {\em admissible} solution.

Since $\pm1,x_1^{\pm1},x_2^{\pm1},x_3^{\pm1}$ are distinct, we have
\be
(x_j - x_j^{-1}) (x_{j+1}^2 - x_{j+1}^{-2}) - (x_j^2 - x_j^{-2}) (x_{j+1} - x_{j+1}^{-1}) \neq 0
\ee
\noindent
for $j \in \{1,2,3\}$, where indices are taken modulo $3$. Thus, we can write $c_j/c_k$ as
\begin{equation} \label{eq:cjck}
\frac{c_j}{c_k} = \frac{(x_{j+1} - x_{j+1}^{-1})(x_{j+2}^2 - x_{j+2}^{-2}) - (x_{j+1}^2 - x_{j+1}^{-2})(x_{j+2} - x_{j+2}^{-1})}{(x_{k+1} - x_{k+1}^{-1})(x_{k+2}^2 - x_{k+2}^{-2}) - (x_{k+1}^2 - x_{k+1}^{-2})(x_{k+2} - x_{k+2}^{-1})} \in K \setminus \Q .
\end{equation}

A number field $L$ is {\em abelian} if it is a subfield of $\Q(\zeta)$ for some root of unity $\zeta$. Any subfield of $\Q(\zeta)$ has an abelian Galois group. The {\em conductor} $f(L)$ is the smallest positive integer $n$ such that $L$ is a subfield of $\Q(\zeta_n)$, where $\zeta_n$ is a primitive $n$th root of unity. The conductor of a cubic abelian number field $L$ has the form
\begin{equation} \label{eq:cond}
f(L) = 9^{\eps} p_1 \cdots p_m
\end{equation}
\noindent
for some $\eps \in \{0,1\}$ and distinct primes $p_j$ such that $p_j \equiv 1 \pmod 3$, and there are $2^{\eps + m}$ cubic abelian number fields with conductor $9^{\eps} p_1 \cdots p_m$ (see, for instance, \cite{May:cond}).

Our main tool for constraining the admissible solutions of equations (\ref{eq:res1}) and (\ref{eq:res2}) is a generalization of Mann's theorem \cite{Man:roots} on $\Q$-linear relations between roots of unity. Let $L$ be a number field. An {\em $L$-relation} is a linear relation $\sum_{j=1}^k a_j z_j = 0$ among distinct roots of unity $z_j$ with coefficients $a_j \in L$. An $L$-relation is {\em primitive} if $\sum_{j \in S} a_j z_j \neq 0$ for all proper nonempty subsets $S \subset \{1,\dots,k\}$. Any $L$-relation is a sum of primitive $L$-relations, possibly in more than one way. The {\em length} of an $L$-relation is the number of terms $k$, and the {\em order} of an $L$-relation is the size of the multiplicative group generated by the roots of unity $z_j / z_1$. The notation $a \mid b$ means that $a$ divides $b$. The notation $a \left|\right| b$ means that $a$ divides $b$ exactly once, that is, $b/a$ is an integer but $b/a^2$ is not. The following theorem provides an effective bound on the order of a primitive $L$-relation in terms of its length and the degree of $L$.

\begin{thm} \label{thm:DZ} (Theorem 1 in \cite{DZ:roots})
Suppose that $\sum_{j=1}^k a_j z_j = 0$ is a primitive $L$-relation of length $k$ and order $n$. Let $\zeta_n$ be a primitive $n$-th root of unity, and let $d = [L \cap \Q(\zeta_n) : \Q]$. For any prime $p$ and any positive integer $m$ such that $p^{m+1} \mid n$, we have $p^m \mid 2d$. Moreover, the primes that divide $n$ exactly once are bounded by
\be
\sum_{p \left|\right| n} \left(\frac{p-1}{\gcd(d,p-1)} - 1\right) \leq k - 2 .
\ee
\end{thm}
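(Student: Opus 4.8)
The plan is to reprove this bound, which generalizes Mann's theorem \cite{Man:roots}, by adapting the classical argument to the relative setting over $L$. First I would normalize: dividing the relation by $z_1$, we may assume it has the form $\sum_{j=1}^k a_j \zeta_n^{b_j} = 0$, where $\zeta_n$ is a fixed primitive $n$-th root of unity, the exponents $b_j$ are distinct modulo $n$, and $a_j \in L$; primitivity says no proper nonempty subsum vanishes. There is no loss in replacing $L$ by the subfield generated over $\Q$ by the $a_j$. Set $F = L \cap \Q(\zeta_n)$, so $[F:\Q] = d$. Then $L$ and $\Q(\zeta_n)$ are linearly disjoint over $F$, and restriction gives $\Gal(L(\zeta_n)/L) \cong \Gal(\Q(\zeta_n)/F)$, the subgroup $H \le (\Z/n\Z)^\times$ of index $d$ cutting out $F$. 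The one tool used repeatedly is the elementary fact that for a prime $\ell$ and any prime $\mf{l}$ of $\ol{\Q}$ above $\ell$, every $\ell$-power root of unity is $\equiv 1 \pmod{\mf l}$; reducing a relation modulo $\mf l$ therefore forgets the $\ell$-primary part of each root of unity appearing.

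For the prime-power statement, suppose $p^{m+1} \mid n$ and write $n = p^a n'$ with $p \nmid n'$ and $a \ge m+1$. Factor each $\zeta_n^{b_j}$ uniquely as $\xi_j \eta_j$ with $\xi_j$ a $p^a$-th root of unity and $\eta_j$ an $n'$-th root of unity. Reducing modulo a prime above $p$ collapses every $\xi_j$ to $1$; iterating this together with primitivity — any vanishing subsum that appears must be peeled off, which forces finer structure on the surviving terms — eventually isolates a genuine nonzero $E$-linear relation among distinct $p^a$-th roots of unity, where $E$ is obtained from $L$ by adjoining $n'$-th roots of unity. Comparing $[\Q(\zeta_{p^a}):\Q]$ with the degree over $\Q$ of $E \cap \Q(\zeta_{p^a})$, the latter being controlled by $d$, then forces $\varphi(p^{m+1}) = p^m(p-1)$ to divide $2d$, and in particular $p^m \mid 2d$. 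The factor of $2$ enters at the last step of the localization, where the coefficient field can fail to capture all of $L \cap \Q(\zeta_{p^a})$ by an index at most $2$; pinning this constant down is one of the delicate points.

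The main inequality I would prove by induction on the length $k$. If no prime divides $n$ exactly once, the left-hand side is an empty sum and $0 \le k-2$ holds, since any relation has at least two terms. Otherwise pick $p \left|\right| n$ and write $n = p n'$, so $\zeta_n^{b_j} = \zeta_p^{e_j}\eta_j$ with $0 \le e_j < p$ and $\eta_j \in \mu_{n'}$. Collecting terms by the value of $e_j$ rewrites the relation as $\sum_{i=0}^{p-1} S_i \zeta_p^i = 0$ with $S_i = \sum_{e_j = i} a_j \eta_j \in E := L(\zeta_{n'})$. Since $E \cap \Q(\zeta_p)$ is a subfield of the cyclic field $\Q(\zeta_p)$ whose degree over $\Q$ divides both $p-1$ and $d$, hence divides $\gcd(d,p-1)$, the minimal polynomial of $\zeta_p$ over $E$ has degree $f := [E(\zeta_p):E]$, a multiple of $(p-1)/\gcd(d,p-1)$. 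The $E$-linear relations among $1, \zeta_p, \dots, \zeta_p^{p-1}$ are spanned by the shifts of that minimal polynomial; feeding this back, and using primitivity of the original relation, lets one extract a primitive relation of order $n/p$ over a controlled field of length at most $k - (f-1) \le k - \big(\tfrac{p-1}{\gcd(d,p-1)} - 1\big)$. Iterating over all primes dividing $n$ exactly once and bottoming out at a relation of length $\ge 2$ yields $\sum_{p \left|\right| n}\big(\tfrac{p-1}{\gcd(d,p-1)} - 1\big) \le k-2$.

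I expect the genuine difficulty to lie in the reduction steps rather than in the cyclotomic Galois theory (cyclicity of $\Q(\zeta_p)/\Q$, intersections of cyclotomic fields with $L$, linear disjointness), which is routine. Concretely: after collecting by powers of $\zeta_p$, one must use the vanishing $E$-combination $\sum S_i \zeta_p^i = 0$ and primitivity of the original relation to produce a strictly shorter primitive relation whose order has lost the factor $p$, while ensuring that the ambient field degree ``$d$'' does not grow enough to degrade the bound — this bookkeeping, and the analogous localization behind the claim $p^m \mid 2d$, are exactly the technical core of \cite{DZ:roots}, and carrying them out carefully is the real work.
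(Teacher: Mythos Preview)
The paper does not prove this theorem at all: it is quoted verbatim as ``Theorem 1 in \cite{DZ:roots}'' and used as a black box throughout Section~\ref{sec:circum}. There is therefore nothing in the paper to compare your proposal against.

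Your sketch is a reasonable outline of the Dvornicich--Zannier argument (normalize, exploit linear disjointness to identify $\Gal(L(\zeta_n)/L)$ with a subgroup of $(\Z/n\Z)^\times$ of index $d$, then peel off one prime $p \,\|\, n$ at a time and induct on length), and you correctly flag the delicate points: controlling how the effective ``$d$'' changes when one adjoins $\zeta_{n'}$ during the induction, and the source of the factor $2$ in the prime-power bound. But as written it is a proof plan rather than a proof --- the step ``feeding this back, and using primitivity of the original relation, lets one extract a primitive relation of order $n/p$ \dots of length at most $k - (f-1)$'' is exactly where the work lies, and you have not carried it out. If the goal is to supply a self-contained proof for the paper, you would need to fill that in; if the goal is merely to use the result, a citation (as the paper does) suffices.
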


We want to apply Theorem \ref{thm:DZ} to bound the orders of the roots of unity in an admissible solution to equations (\ref{eq:res1}) and (\ref{eq:res2}). However, this requires some casework, since the relations in these equations are not necessarily primitive. Ultimately, we will obtain the following bound.

\begin{thm} \label{thm:bound}
Suppose that $c_1,c_2,c_3,x_1,x_2,x_3$ are an admissible solution to equations to (\ref{eq:res1}) and (\ref{eq:res2}). Let $K = \Q(c_1,c_2,c_3)$, and let $n = \lcm(\ord(x_1),\ord(x_2),\ord(x_3))$. Then either $n \in \{7,14\}$ and $K = \Q(\cos(\pi/7))$, or $n = 18$ and $K = \Q(\cos(\pi/9))$.
\end{thm}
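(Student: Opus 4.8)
The plan is to first use the arithmetic of cyclotomic fields together with the Dvornicich--Zannier bound (Theorem \ref{thm:DZ}) to reduce to finitely many possibilities for $n$ and $K$, and then to eliminate all but the two claimed cases using both equations and the admissibility constraints. As a first step one identifies $K$ as a small cubic abelian field: since the $c_j$ are real, $K$ is a real field, and algebraic primitivity gives $[K:\Q] = 3$; by (\ref{eq:cjck}) each ratio $c_j/c_k$ lies in $K \setminus \Q$, and since $x_1,x_2,x_3$ generate a cyclic group of roots of unity of order $n$ we have $c_j/c_k \in \Q(x_1,x_2,x_3) = \Q(\zeta_n)$, so (as $K$ has no proper intermediate subfield) $K = \Q(c_j/c_k) \subseteq \Q(\zeta_n)$. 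Hence $K$ is cubic abelian, so by (\ref{eq:cond}) its conductor has the form $f(K) = 9^{\eps} p_1 \cdots p_m$ with the $p_i \equiv 1 \pmod 3$; in particular $f(K)$ is odd and $f(K) \mid n$.

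Next, I would bound the orders of the $x_j$. Equations (\ref{eq:res1}) and (\ref{eq:res2}) are $K$-relations among the roots of unity $x_j^{\pm r}$ of length at most $6$; after merging any coincidences among these roots of unity (which can only come from $x_j^4 = 1$, making a term vanish, or from a sign relation $x_i = \pm x_j^{\pm 1}$, a degenerate configuration to be treated by hand) one decomposes each relation into primitive $K$-sub-relations. A length-$2$ primitive sub-relation $a z + a' z' = 0$ would force $z/z' \in K$ to be a root of unity, hence $\pm 1$ since $K$ is real, and then some $c_j/c_k$ would be rational; checking the possible pairings in (\ref{eq:res1}) and (\ref{eq:res2}) rules this out, so each relation is primitive of length at most $6$ or splits into two primitive sub-relations of length $3$, and in the latter case complex conjugation --- which sends each equation to its negative while permuting its terms by $x_j^{\pm r} \leftrightarrow x_j^{\mp r}$ --- exchanges the two sub-relations, making them complex conjugates of one another. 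Applying Theorem \ref{thm:DZ} to each primitive constituent (length $k \le 6$, and $d = [K \cap \Q(\zeta_{n'}) : \Q] \in \{1,3\}$): when the constituent is the whole relation, a descent argument using that the system (\ref{eq:res1})--(\ref{eq:res2}) has a one-dimensional solution space over $\ol\Q$ shows $K \subseteq \Q(\zeta_{n'})$, so $d = 3$ and the constraints $p^{m+1} \mid n' \Rightarrow p^m \mid 6$ and $\sum_{p \,||\, n'} \big( \tfrac{p-1}{\gcd(3,p-1)} - 1 \big) \le 4$ hold; when a constituent has length $3$, the analogous bound with $k-2 = 1$ on the right is much sharper. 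Combined with $f(K) \mid n$ and (\ref{eq:cond}), this confines $n$ and $K$ to a finite list.

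Finally, for each surviving candidate one enumerates the finitely many tuples $(x_1,x_2,x_3)$ with $\lcm(\ord(x_1),\ord(x_2),\ord(x_3)) = n$ and $\pm 1, x_j^{\pm 1}$ distinct, solves for $(c_1 : c_2 : c_3)$ via (\ref{eq:cjck}), and discards the tuple unless $(c_1,c_2,c_3)$ spans a cubic field, the dual basis of $(c_1,c_2,c_3)$ with respect to the trace pairing of $K$ (a real multiple of which must be the heights $h_j$, by Theorem \ref{thm:ratpos}) produces moduli $h_j/c_j$ whose ratios are all rational and positive, and the positivity inequalities of part (4) of Theorem \ref{thm:irred} hold. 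A short computer search over the finite list from the previous step leaves only $n \in \{7,14\}$ with $K = \Q(\cos(\pi/7))$ and $n = 18$ with $K = \Q(\cos(\pi/9))$.

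The main obstacle will be the case in which an equation is primitive of length $6$: there Theorem \ref{thm:DZ} only confines $n'$ to a moderately large finite set, since the inequality $\sum_{p \,||\, n'}(\tfrac{p-1}{\gcd(3,p-1)} - 1) \le 4$ still permits several small primes at once (products of powers of $2$ and $3$ with one of $5, 7, 13$, say). Pruning this set to a size that the admissibility search of the last step can feasibly handle requires the extra input flagged in the introduction: exploiting both equations simultaneously, using the action of $\Gal(\Q(\zeta_n)/K)$ to generate further relations of the same shape, and tracking carefully the power of $2$ dividing each $\ord(x_j)$.
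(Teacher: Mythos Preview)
Your plan is essentially the same as the paper's: reduce to a finite problem via Theorem~\ref{thm:DZ} applied to the $K$-relations (\ref{eq:res1})--(\ref{eq:res2}), then finish by a computer search over admissible tuples using conditions (1)--(4) of Theorem~\ref{thm:irred}. The ``main obstacle'' you flag in your last paragraph is exactly where the paper's work lies: Lemmas~\ref{lem:bound}--\ref{lem:2pow1} carry out precisely the $2$-adic casework and the Galois trick (applying the automorphism $\sigma:y_j\mapsto y_j^2$ on the odd part) that you anticipate, ultimately cutting the search space down to $n\mid 2^3\cdot 7$ or $n\mid 2^3\cdot 9$ before the computer pass.
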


We keep the notation from Theorem \ref{thm:bound} for the rest of this section. First, we obtain a weaker bound on $n$.

\begin{lem} \label{lem:bound}
We have $n \mid 2^4 \cdot 3^2 \cdot 7 \cdot 13$.
\end{lem}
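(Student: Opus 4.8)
The plan is to treat \eqref{eq:res1} and \eqref{eq:res2} as $K$-relations among roots of unity, decompose each into primitive sub-relations, apply Theorem \ref{thm:DZ} to the pieces, and reassemble the resulting divisibility constraints into a bound on $n$. Write $x_j = \zeta_n^{a_j}$, so $\gcd(a_1,a_2,a_3,n) = 1$; by Theorem \ref{thm:irred}(3) the roots of unity $\pm1, x_1^{\pm1}, x_2^{\pm1}, x_3^{\pm1}$ are distinct, so \eqref{eq:res1} is genuinely a relation among six distinct roots of unity. Recall also that $K = \Q(c_1,c_2,c_3)$ is cubic and, by Theorem \ref{thm:res}, lies in $\Q(x_1,x_2,x_3) = \Q(\zeta_n)$; hence $K$ is abelian with conductor $f(K)$ of the shape \eqref{eq:cond}, and for any modulus $m$ the degree $[K \cap \Q(\zeta_m):\Q]$ equals $3$ if $f(K) \mid m$ and equals $1$ otherwise.

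The first step is to rule out vanishing subsums of length $2$. The six coefficients occurring in \eqref{eq:res1} are $\pm c_1, \pm c_2, \pm c_3$, and after merging any terms of \eqref{eq:res2} whose roots of unity coincide, its coefficients lie in $\{\pm c_j\} \cup \{\pm(c_j + c_k)\}$; since $c_1, c_2, c_3$ are a $\Q$-basis of $K$, in each case the coefficients attached to distinct terms are pairwise distinct and nonzero. In a hypothetical vanishing subsum $\alpha z + \beta w = 0$ of two distinct terms, $z/w = -\beta/\alpha$ is a root of unity lying in the cubic field $K$, hence equals $-1$, which forces $\alpha = \beta$ — a contradiction. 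Therefore each of \eqref{eq:res1} and \eqref{eq:res2} (after merging) is either itself a primitive $K$-relation of length at most $6$, or splits as a sum of two primitive $K$-relations of length $3$.

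The second step applies Theorem \ref{thm:DZ} to each primitive piece, of some length $k \le 6$ and order $n' \mid n$, with $d = [K \cap \Q(\zeta_{n'}):\Q] \in \{1,3\}$. Part (a) shows that the $2$-part of $n'$ is at most $4$, the $3$-part at most $9$ (and at most $3$ when $d = 1$), and that $n'$ is squarefree away from $\{2,3\}$; part (b) bounds $\sum_{p \left|\right| n'}\!\bigl(\tfrac{p-1}{\gcd(d,p-1)} - 1\bigr)$ by $k - 2 \le 4$. When $d = 3$, the conductor $f(K)$ — of the form \eqref{eq:cond} — must divide $n'$, which restricts which primes $\equiv 1 \pmod 3$ can appear; when $d = 1$, no prime $p \ge 7$ can appear. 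Running through the finitely many choices of $(k,d)$ and of the admissible sets of primes dividing $n'$ exactly once shows that each primitive piece has order dividing $2^2 \cdot 3^2 \cdot 5 \cdot 7 \cdot 13$, with $5$ and $13$ never simultaneously present; the residual configurations in which $5$ actually occurs are then discarded by casework on primitivity and on the powers of $2$ dividing the $x_j$, exploiting the simultaneous validity of \eqref{eq:res1} and \eqref{eq:res2} (and, in a few borderline cases, the positivity constraints of Theorem \ref{thm:irred}(4)). So every primitive piece has order dividing $2^2 \cdot 3^2 \cdot 7 \cdot 13$.

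It remains to recover $n$ itself. The six terms of \eqref{eq:res1} generate inside $\Z/n\Z$ the subgroup $\langle 2a_1, 2a_2, 2a_3, a_2 - a_1, a_3 - a_1 \rangle$, whose index divides $2$ because $\gcd(a_1,a_2,a_3,n) = 1$; thus when \eqref{eq:res1} is itself primitive of order $n'$ we get $n \mid 2n'$ at once. When instead \eqref{eq:res1} (or \eqref{eq:res2}) splits into two length-$3$ pieces, Theorem \ref{thm:DZ} controls only the pieces and not the order of the full configuration, so one must combine the orders of the two pieces with the constraints forced by the other equation; these mixed cases are precisely the ones in which the $2$-part of $n$ can climb to $2^4$. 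Assembling all cases gives $n \mid 2^4 \cdot 3^2 \cdot 7 \cdot 13$. The main obstacle, as this outline suggests, is the non-primitivity of \eqref{eq:res1} and \eqref{eq:res2}: since Theorem \ref{thm:DZ} is silent on non-primitive relations, the proof rests on carefully enumerating the possible decompositions of the two six-term relations, eliminating the impossible ones, and reconstructing $n$ from the primitive fragments — a reconstruction that genuinely requires both equations rather than either alone.
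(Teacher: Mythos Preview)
Your outline follows the same strategy as the paper's proof: decompose \eqref{eq:res1} and \eqref{eq:res2} into primitive $K$-subrelations, apply Theorem~\ref{thm:DZ}, and then reconstruct a bound on $n$. Your elimination of length-$2$ subsums is correct (and arguably tidier than the paper's version). However, two of the steps you defer to ``casework'' are exactly the substance of the proof, and your sketch does not indicate that you know how to carry them out.

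First, the $3+3$ case. When \emph{both} \eqref{eq:res1} and \eqref{eq:res2} split with each $x_j$ appearing once per piece, Theorem~\ref{thm:DZ} bounds only the orders of the ratios \emph{within} each piece, and ``combining with the other equation'' is not a matter of bookkeeping: the paper shows that this configuration forces $c_1 x_1^r + c_2 x_2^r + c_3 x_3^r = 0$ for $r \in \{\pm 1,\pm 2\}$, solves for $c_j/(c_1+c_2+c_3)$ in two ways, deduces $x_j^2 = x_{j+1}x_{j+2}$, and obtains the contradiction $c_1 = c_2 = c_3$. Without this (or something equivalent) you have no control on $n$ in this branch. Your remark that ``these mixed cases are precisely the ones in which the $2$-part of $n$ can climb to $2^4$'' is also off: in the paper the $2^4$ already appears when \eqref{eq:res2} is primitive, since its order only controls $\ord(x_j^4)$.

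Second, the elimination of the prime $5$. The paper's argument is specific: since $5 \nmid f(K)$, the fields $\Q(\zeta_5)$ and $\Q(y_1,y_2,y_3)$ are linearly disjoint over $\Q$, so one can group the terms of \eqref{eq:res1} by their power of $\zeta_5$ and force a contradiction with the $\Q$-linear independence of $c_1,c_2,c_3$. Your appeal to ``powers of $2$ dividing the $x_j$'' and to the positivity constraints of Theorem~\ref{thm:irred}(4) is misplaced --- neither plays any role in Lemma~\ref{lem:bound}, and invoking (4) here suggests you have not actually closed this case.
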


\begin{proof}
Let $n^\pr = \lcm(\ord(x_j x_k) : j,k \in \{1,2,3\})$. If the $K$-relation in (\ref{eq:res1}) is primitive, then Theorem \ref{thm:DZ} with $k = 6$ and $d \mid 3$ implies $n^\pr \mid 2^2 \cdot 3^2 \cdot 5 \cdot 7$ or $n^\pr \mid 2^2 \cdot 3^2 \cdot 7 \cdot 13$. In particular, since $\ord(x_j^2) \mid n^\pr$ for $j \in \{1,2,3\}$, we have $n \mid 2^3 \cdot 3^2 \cdot 5 \cdot 7$ or $n \mid 2^3 \cdot 3^2 \cdot 7 \cdot 13$. Similarly, if the $K$-relation in (\ref{eq:res2}) is primitive, then $n \mid 2^4 \cdot 3^2 \cdot 5 \cdot 7$ or $n \mid 2^4 \cdot 3^2 \cdot 7 \cdot 13$. We will rule out the factor of $5$ at the end.

Suppose that neither $K$-relation is primitive. Then each of these $K$-relations contains a primitive $K$-relation of length $2$ or $3$. In the length $2$ case, we have
\be
\eps_1 c_j x_j^{\eps_1 r} + \eps_2 c_k x_k^{\eps_2 r} = 0
\ee
for some $\eps_1,\eps_2 \in \{\pm 1\}$, $j,k \in \{1,2,3\}$, $r \in \{1,2\}$. If $j \neq k$, then $c_j/c_k$ is a real root of unity and so $c_j/c_k = \pm 1$. This contradicts the $\Q$-linear independence of $c_1,c_2,c_3$. If $j = k$, then $\eps_2 = -\eps_1$ and $x_j^r - x_j^{-r} = 0$, and so $x^{2r} = 1$. Since $\pm 1, x_j^{\pm 1}$ are distinct, it must be that $r = 2$ and $x_j \in \{\pm i\}$. After permuting indices, we have $c_1 (x_1^2 - x_1^{-2}) + c_2 (x_2^2 - x_2^{-2}) = 0$. If this $K$-relation is primitive, then Theorem \ref{thm:DZ} with $k = 4$ and $d \mid 3$ implies $n \mid 2^4 \cdot 3^2 \cdot 7$, and otherwise, as above it must be that $x_j \in \{\pm i\}$ for $j \in \{1,2,3\}$.

Next, suppose that each $K$-relation contains a primitive $K$-relation of length $3$. If some $x_j$ appears twice in a primitive $K$-relation of length $3$, then after permuting indices, we have
\be
c_1 (x_1^r - x_1^{-r}) + \eps c_2 x_2^{\eps r} = 0, \quad c_3 (x_3^r - x_3^{-r}) - \eps c_2 x_2^{-\eps r} = 0
\ee
for some $\eps \in \{\pm 1\}$, $r \in \{1,2\}$. Then Theorem \ref{thm:DZ} with $k = 3$ and $d \mid 3$ implies $n \mid 2^4 \cdot 3^2 \cdot 7$. Otherwise, each $x_j$ appears exactly once in each primitive $K$-relation of length $3$, and we have
\be
c_1 x_1^r + c_2 x_2^r + c_3 x_3^r = 0
\ee
for $r \in \{\pm 1, \pm 2\}$. Since $x_1,x_2,x_3$ are distinct, we can solve for $c_j/(c_1 + c_2 + c_3)$ for $j \in \{1,2,3\}$ in two different ways, using the equations for $r \in \{1,2\}$ and the equations for $r \in \{-1,-2\}$, to get
\be
\frac{c_j}{c_1+c_2+c_3} = \frac{-x_{j+1}x_{j+2}}{(x_j-x_{j+1})(x_{j+2}-x_j)} = \frac{-x_{j+1}^{-1}x_{j+2}^{-1}}{(x_j^{-1}-x_{j+1}^{-1})(x_{j+2}^{-1}-x_j^{-1})}
\ee
where indices are taken modulo $3$. After simplifying, we get $x_j^2 = x_{j+1}x_{j+2}$, and after permuting indices, $x_2 = \zeta_3 x_1$ and $x_3 = \zeta_3^2 x_1$ where $\zeta_3$ is a primitive third root of unity. Then the relations $c_1 + c_2 \zeta_3 + c_3 \zeta_3^2 = 0$ and $c_1 + c_2 \zeta_3^2 + c_3 \zeta_3 = 0$ imply $c_1 = c_2 = c_3$, contradicting the $\Q$-linear independence of $c_1,c_2,c_3$.

It remains to show that $5 \nmid \ord(x_j)$ for $j \in \{1,2,3\}$. By the previous two paragraphs, we may assume that the $K$-relations in equations (\ref{eq:res1}) and (\ref{eq:res2}) are primitive. Then we can write $x_j = \zeta_5^{n_j} y_j$ with $\ord(y_j) \mid 2^4 \cdot 3^2 \cdot 7 \cdot 13$, where $\zeta_5$ is a primitive fifth root of unity. Since $K \subset \Q(x_1,x_2,x_3)$, and since the conductor $f(K)$ is not divisible by $5$, we have $K \subset \Q(y_1,y_2,y_3)$. Since $5 \nmid \ord(y_j)$ for $j \in \{1,2,3\}$, the number fields $\Q(\zeta_5)$ and $\Q(y_1,y_2,y_3)$ are linearly disjoint over $\Q$. Suppose that $n_1,n_2,n_3 \in \Z/5$ are not all zero. If $S = \{\pm n_1, \pm n_2, \pm n_3\}$ is a proper subset of $\Z/5$, then the powers $\{\zeta_5^m : m \in S\}$ are linearly independent over $\Q$, and therefore linearly independent over $\Q(y_1,y_2,y_3)$. Then by grouping terms in equation (\ref{eq:res1}) according to their power of $\zeta_5$, we see that the $K$-relation in (\ref{eq:res1}) is not primitive, a contradiction. If $S = \Z/5$, then after permuting indices and replacing some of the $x_j$ with $-x_j^{-1}$, we have $n_1 = 0$, $n_2 = 1$, $n_3 = 2$. Since $\zeta_5^4 = -(1 + \zeta_5 + \zeta_5^2 + \zeta_5^3)$, we can rewrite equation (\ref{eq:res1}) as
\be
(c_1y_1 - c_1y_1^{-1} + c_2y_2^{-1}) + \zeta_5(c_2y_2 + c_2y_2^{-1}) + \zeta_5^2(c_2y_2^{-1} + c_3y_3) + \zeta_5^3(c_2y_2^{-1} - c_3y_3^{-1}) = 0 .
\ee
Then each of the $4$ terms must be equal to $0$. By considering the $\zeta_5^2$ term, we see that $c_2/c_3 = -y_3/y_2$ is a real root of unity, so $c_2/c_3 = \pm 1$, contradicting the $\Q$-linear independence of $c_1,c_2,c_3$.
\end{proof}

The bound in Lemma \ref{lem:bound} is not good enough for a feasible computer search, so we will narrow down the possible solutions by considering cases according to the powers of $2$ that divide $\ord(x_j)$.

\begin{lem} \label{lem:2pow4}
If $2^m \mid \ord(x_j)$ for some $m \geq 2$ and some $j \in \{1,2,3\}$, then $m \leq 3$ and $2^m \mid \ord(x_j)$ for all $j \in \{1,2,3\}$.
\end{lem}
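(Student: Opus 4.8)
The plan is to exploit the structure of the Galois group of the cyclotomic field generated by the $x_j$ together with the constraint that $K = \Q(c_1,c_2,c_3)$ is a fixed cubic field, whose conductor $f(K)$ is odd (of the form $9^\eps p_1\cdots p_m$ by \eqref{eq:cond}). Write $n = \lcm(\ord(x_1),\ord(x_2),\ord(x_3))$ and suppose $2^m \mid \ord(x_{j_0})$ for some $j_0$ with $m \geq 2$; by Lemma \ref{lem:bound} we already know $2^4 \mid n$ at worst, so $m \leq 4$ a priori, and the first task is to rule out $m = 4$. For this I would return to the casework in the proof of Lemma \ref{lem:bound}: the bound $2^4 \| n$ only arose from cases where one of the $K$-relations degenerates (a primitive subrelation of length $2$ or $3$ forcing some $x_j \in \{\pm i\}$, or a length-$4$ primitive relation to which Theorem \ref{thm:DZ} with $k=4$, $d \mid 3$ applies giving $n \mid 2^4 \cdot 3^2 \cdot 7$). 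I would check that in each such case the $2$-part of every $\ord(x_j)$ is in fact at most $2^3$ — e.g. in the degenerate case the offending $x_j = \pm i$ has $2$-part exactly $4$, and if the remaining relation is a primitive $K$-relation of length $4$, Theorem \ref{thm:DZ} applied prime-by-prime at $p = 2$ gives: if $2^{m+1} \mid n$ then $2^m \mid 2d$ with $d \mid 3$, so $2^{m} \mid 6$, forcing $m \leq 1$, i.e. $2^3 \nmid n$ from that relation — a contradiction unless the $2$-part comes entirely from the $\pm i$ term, which has $2$-part $4 = 2^2$. So $m \leq 3$ in all cases. Similarly, in the fully-primitive case, Theorem \ref{thm:DZ} with $k = 6$, $d \mid 3$ gives: $2^{m+1} \mid n'$ forces $2^m \mid 2d \mid 6$, so $2^m \mid 6$, hence $2^1 \| n'$ and therefore $2^2 \| n$ (one extra power from $\ord(x_j^2) \mid n'$), i.e. again $m \leq 2 \leq 3$.

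For the second assertion — that if $2^m \mid \ord(x_{j_0})$ with $m \geq 2$ then $2^m \mid \ord(x_j)$ for \emph{all} $j$ — the key idea is a linear-disjointness / descent argument analogous to the elimination of the prime $5$ at the end of the proof of Lemma \ref{lem:bound}. Write each $x_j = \zeta_{2^m}^{a_j} y_j$ with $\ord(y_j)$ coprime to $2$ times an appropriate power, so that the $2$-part of $\ord(x_j)$ is determined by whether $a_j$ is a unit mod $2^m$. Suppose, for contradiction, that the $2$-parts are not all equal to $2^m$; reindex so that $x_1$ has $2$-part exactly $2^m$ but $x_3$ (say) has strictly smaller $2$-part. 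Decompose both $K$-relations \eqref{eq:res1}, \eqref{eq:res2} into their $\zeta_{2^m}$-isotypic components over the field $F = \Q(\text{prime-to-}2\text{ part})$; since $f(K)$ is odd, $K \subset F$, and $\Q(\zeta_{2^m})$ and $F$ are linearly disjoint over $\Q$, so grouping terms of \eqref{eq:res1} and \eqref{eq:res2} by their power of $\zeta_{2^m}$ yields separate vanishing $K$-relations among the surviving roots of unity in $F$ together with those $x_j$ whose $2$-part is maximal. The terms involving $x_1^{\pm 1}$ and $x_1^{\pm 2}$ then live in a higher $\zeta_{2^m}$-graded piece than the terms involving $x_3$, so they are segregated; tracking which isotypic component each of the six terms $c_j x_j^{\pm r}$ lands in, and using that the relation must vanish componentwise, one is forced into a subrelation of length $\leq 2$ or a relation forcing $c_j/c_k$ to be a real root of unity (hence $\pm 1$), contradicting the $\Q$-linear independence of $c_1,c_2,c_3$ — exactly the kind of contradiction harvested repeatedly in Lemma \ref{lem:bound}.

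I expect the main obstacle to be the bookkeeping in the isotypic decomposition: one must carefully enumerate the possible patterns of which $x_j$ have maximal $2$-part (it could be one or two of them), handle the fact that each relation involves both $x_j$ and $x_j^{-1}$ (so two graded degrees $\pm a_j$ appear simultaneously), and in the subcase where exactly two of the three have maximal $2$-part, rule out the possibility that those two conspire in a genuine primitive length-$2$ or length-$4$ $K$-relation — which is where the distinctness of $\pm 1, x_1^{\pm 1}, x_2^{\pm 1}, x_3^{\pm 1}$ and the positivity/field constraints from Theorem \ref{thm:irred} must be invoked. The bound $m \leq 3$ itself is comparatively routine given Theorem \ref{thm:DZ} and the $g=3$ casework already in place; the substance of the lemma is the "all or nothing" statement, and it reduces, as above, to a linear-disjointness argument over the odd-conductor field $K$.
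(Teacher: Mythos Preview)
Your isotypic-decomposition argument for the ``all or nothing'' claim is essentially the paper's approach and is correct in outline: group the six terms of (\ref{eq:res1}) by their power of $\zeta_{2^m}$ modulo $2^{m-1}$ (using that $\zeta_{2^m}^{2^{m-1}} = -1$ and that the remaining powers are linearly independent over the odd-conductor field containing $K$), and observe that if the residue classes $\{\pm n_j \bmod 2^{m-1}\}$ are not all the same then some $c_j(x_j - x_j^{-1})$ sits alone in its class, forcing $x_j = x_j^{-1}$.

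Your argument for $m \leq 3$, however, has a genuine gap, and the paper proceeds in the opposite order for exactly this reason. Your claim that the bound $2^4$ in Lemma~\ref{lem:bound} ``only arose from cases where one of the $K$-relations degenerates'' is incorrect: the $2^4$ comes from applying Theorem~\ref{thm:DZ} to the \emph{primitive} relation (\ref{eq:res2}), whose terms involve $x_j^{\pm 2}$ --- the extra power of $2$ is the cost of passing from a bound on $\ord(x_j^2)$ back to one on $\ord(x_j)$. Concretely, consider the case where (\ref{eq:res1}) splits as $c_1 x_1 + c_2 x_2 + c_3 x_3 = 0$ together with its conjugate, while (\ref{eq:res2}) remains primitive. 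Theorem~\ref{thm:DZ} applied to the length-$3$ pieces bounds only the $2$-part of the ratios $x_j/x_k$; it says nothing about the common $2$-part of the $\ord(x_j)$, and (\ref{eq:res2}) alone still allows $2^4$. (Your computations in the other subcases also slip: for instance, after removing an $x_j = \pm i$ term from (\ref{eq:res2}), the remaining length-$4$ relation in $x_k^{\pm 2}$ bounds $\ord(x_k^4)$, hence $\ord(x_k)$, only up to $2^4$, not $2^2$.)

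The paper instead proves the all-or-nothing claim \emph{first} and uses it as the input to $m \leq 3$. Once all $n_j$ lie in a common residue class mod $2^{m-1}$, one normalizes so that (up to replacing some $c_j$ by $-c_j$ and some $y_j$ by $y_j^{-1}$) the $\zeta_{2^m}$-exponents agree. For $m \geq 4$ the four powers $\zeta_{2^m}^{\pm n_1}, \zeta_{2^m}^{\pm 2n_1}$ are then linearly independent over the odd part (since $n_1$ is odd and $2n_1 \not\equiv -2n_1 \bmod 2^{m-1}$), so both (\ref{eq:res1}) and (\ref{eq:res2}) split completely, yielding $c_1 y_1^r + c_2 y_2^r + c_3 y_3^r = 0$ for all $r \in \{\pm 1, \pm 2\}$. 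This is exactly the four-relation system already shown in the proof of Lemma~\ref{lem:bound} to contradict the $\Q$-linear independence of $c_1,c_2,c_3$. So the all-or-nothing statement is not a separate fact proved after $m \leq 3$; it is what makes $m \leq 3$ go through.
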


\begin{proof}
We may assume that $m$ is minimal such that we can write $x_j = \zeta_{2^m}^{n_j}y_j$ with $\ord(y_j)$ odd for all $j$, where $\zeta_{2^m}$ is a primitive $2^m$-th root of unity and $n_j \in \Z/2^m$. The number fields $\Q(\zeta_{2^m})$ and $\Q(y_1,y_2,y_3)$ are linearly disjoint over $\Q$. Since $K \subset \Q(x_1,x_2,x_3)$ and the conductor $f(K)$ is odd, we have $K \subset \Q(y_1,y_2,y_3)$. The powers $1,\zeta_{2^m},\dots,\zeta_{2^m}^{2^{m-1}-1}$ are linearly independent over $\Q$, and therefore linearly independent over $\Q(y_1,y_2,y_3)$, and $\zeta_{2^m}^k = -\zeta_{2^m}^{k-2^{m-1}}$. Suppose $m \geq 3$, and group the terms in equation (\ref{eq:res1}) according to their power of $\zeta_{2^m}$. For $j,k \in \{1,2,3\}$, the subsets $\{\pm n_j, \pm n_j + 2^{m-1}\}$ and $\{\pm n_k, \pm n_k + 2^{m-1}\}$ of $\Z/2^m$ are either equal or disjoint. If they are not always equal, then after permuting indices, $\{\pm n_1, \pm n_1 + 2^{m-1}\}$ is disjoint from $\{\pm n_j, \pm n_j + 2^{m-1}\}$ for $j \in \{2,3\}$, which implies the $K$-relation in (\ref{eq:res1}) is not primitive, and in particular $\zeta_{2^m}^{n_1}y_1 - \zeta_{2^m}^{-n_1}y_1^{-1} = x_1 - x_1^{-1} = 0$, contradicting that $\pm 1, x_1^{\pm 1}$ are distinct. Thus, the subsets $\{\pm n_j, \pm n_j + 2^{m-1}\}$ are all equal, and the subsets $\{\pm 2n_j\}$ are all equal. This shows that $2^m \mid \ord(x_j)$ for all $j$. Suppose $m = 2$. Then either $2^2 \mid \ord(x_j)$ for all $j$, or by grouping the terms in equation (\ref{eq:res1}) according to their power of $\zeta_{2^2}$, after permuting indices we get that $x_1 - x_1^{-1} = 0$, contradicting that $\pm 1, x_1^{\pm 1}$ are distinct.

It remains to show that $m \leq 3$. Suppose that $m \geq 4$. Since the subsets $\{\pm n_j, \pm n_j + 2^{m-1}\}$ are all equal, after replacing some of the $c_j$ with $-c_j$, and replacing some of the $y_j$ with $y_j^{-1}$, we can rewrite equations (\ref{eq:res1}) and (\ref{eq:res2}) as
\begin{align*}
\zeta_{2^m}^{n_1}(c_1y_1 + c_2y_2 + c_3y_3) - \zeta_{2^m}^{-n_1}(c_1y_1^{-1} + c_2y_2^{-1} + c_3y_3^{-1}) &= 0 \\
\zeta_{2^m}^{2n_1}(c_1y_1^2 + c_2y_2^2 + c_3y_3^2) - \zeta_{2^m}^{-2n_1}(c_1y_1^{-2} + c_2y_2^{-2} + c_3y_3^{-2}) &= 0 .
\end{align*}
Since $n_1$ is odd and $m \geq 4$, $\zeta_{2^m}^{\pm n_1}$ and $\zeta_{2^m}^{\pm 2n_1}$ are linearly independent over $\Q(y_1,y_2,y_3)$, so $c_1 y_1^r + c_2 y_2^r + c_3 y_2^r = 0$ for $r \in \{\pm 1, \pm 2\}$. If $y_1,y_2,y_3$ are distinct, then by the proof of Lemma \ref{lem:bound}, we see that $c_1,c_2,c_3$ are not $\Q$-linearly independent, a contradiction. Otherwise, after permuting indices, we have $y_2 = y_3$ and $c_1 y_1 + (c_2 + c_3) y_2 = 0$. Then $-(c_2 + c_3)/c_1 = y_1/y_2$ is a real root of unity, so $c_2 + c_3 = \pm c_1$, again contradicting the $\Q$-linear independence of $c_1,c_2,c_3$.
\end{proof}

In light of lemma \ref{lem:2pow4}, we are left with $3$ cases.

\begin{lem} \label{lem:2pow3}
If $2^3 \mid \ord(x_j)$ and $2^4 \nmid \ord(x_j)$ for all $j \in \{1,2,3\}$, then $n \in \{2^3 \cdot 7, 2^3 \cdot 9\}$.
\end{lem}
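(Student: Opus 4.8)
The plan is to reduce equations (\ref{eq:res1}) and (\ref{eq:res2}) to a system of relations among roots of unity of \emph{odd} order, bound those orders, and then eliminate all but the two desired possibilities using algebraic primitivity. First, since $2^3 \mid \ord(x_j)$ and $2^4 \nmid \ord(x_j)$ for all $j$, I would write $x_j = \zeta_8^{n_j} y_j$ with $n_j$ odd and $\ord(y_j)$ odd, so that $n = 2^3 \cdot m$ with $m := \lcm(\ord(y_1),\ord(y_2),\ord(y_3))$, and $m \mid 3^2 \cdot 7 \cdot 13$ by Lemma \ref{lem:bound}; the goal becomes $m \in \{7,9\}$. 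Because $\Q(\zeta_8) \cap \Q(y_1,y_2,y_3) = \Q$, expanding (\ref{eq:res1}) in the basis $1,\zeta_8,\zeta_8^2,\zeta_8^3$ over $\Q(y_1,y_2,y_3)$ and collecting the $\zeta_8$- and $\zeta_8^3$-coordinates yields, after absorbing signs $\pm 1$ into the $c_j$ and replacing some $y_j$ by $y_j^{-1}$, two relations
\be
c_1 u_1 + c_2 u_2 + c_3 u_3 = 0, \qquad c_1 u_1^{-1} + c_2 u_2^{-1} + c_3 u_3^{-1} = 0,
\ee
where $u_j \in \{y_j,y_j^{-1}\}$, the $u_j$ are distinct odd-order roots of unity (using the distinctness of $\pm 1, x_1^{\pm1},x_2^{\pm1},x_3^{\pm1}$), and $\Q(u_1,u_2,u_3) = \Q(y_1,y_2,y_3)$. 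The same expansion of (\ref{eq:res2}) gives a relation $\sum_j \delta_j c_j(u_j^2 + u_j^{-2}) = 0$ for some signs $\delta_j \in \{\pm 1\}$ determined by the $n_j$.

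Next I would note that each of the two relations coming from (\ref{eq:res1}) is a \emph{primitive} $K$-relation of length $3$, where $K = \Q(c_1,c_2,c_3)$: a vanishing subsum of length $2$ would make some $c_j/c_k$ a real, hence $\pm 1$, root of unity, contradicting the $\Q$-linear independence of $c_1,c_2,c_3$. Theorem \ref{thm:DZ} applied to $c_1 u_1 + c_2 u_2 + c_3 u_3 = 0$ (with $k = 3$, $d \mid 3$, and odd order $m_1 := \lcm(\ord(u_2u_1^{-1}),\ord(u_3u_1^{-1}))$) then forces $m_1 \mid 3^2 \cdot 7$. Since the $u_j$ are distinct, the $2 \times 3$ system formed by the two relations has rank $2$, so $(c_1,c_2,c_3) = t\,(u_1(u_2^2-u_3^2),\ u_2(u_3^2-u_1^2),\ u_3(u_1^2-u_2^2))$ for some $t \neq 0$; in particular $e_1 := u_1+u_2+u_3 \neq 0$, because if $e_1 = 0$ then (as the next step shows $e_2 e_3 + e_1 = 0$) also $e_2 := u_1u_2+u_2u_3+u_3u_1 = 0$, making $u_1,u_2,u_3$ a coset of $\mu_3$ and hence $c_1 = c_2 = c_3$.

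The crucial step is to upgrade $m_1 \mid 63$ to $m = m_1$. Writing $u_j = u_1 w_j$ with $w_1 = 1$ and $w_2,w_3$ of order dividing $m_1$, I would substitute the formula for $(c_1,c_2,c_3)$ into $\sum_j \delta_j c_j(u_j^2+u_j^{-2}) = 0$; after clearing $u_1u_2u_3$ and a Vandermonde factor this becomes an identity $u_1^4\,A(w_2,w_3) + B(w_2,w_3) = 0$ with $A,B \in \Q(w_2,w_3) \subseteq \Q(\zeta_{m_1})$, which in the case $\delta_1=\delta_2=\delta_3$ is the symmetric relation $e_2 e_3 + e_1 = 0$ (here $e_3 = u_1u_2u_3$); together with $\overline{e_1} = e_2/e_3$ this already yields $u_1^4 \in \Q(w_2,w_3)$. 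If instead $A = 0$, then $B = 0$, and in that degenerate case the relations $\sum_j \delta_j c_j u_j^2 = 0$ and $\sum_j \delta_j c_j u_j^{-2} = 0$ each hold and are primitive of length $3$; solving them forces $u_j(u_{j+1}^2 + u_{j+2}^2) = \delta_j \lambda$ for a common $\lambda$, and pairwise comparison gives either $e_1 = 0$ (excluded) or $u_1/u_2 \in \mu_3$ with $u_3^2 = u_1 u_2$ (which makes $c_1/c_2 = 1$, excluded). Hence $A \neq 0$ and $u_1^4 = -B/A$ is an odd-order root of unity in $\Q(\zeta_{m_1})$; since $m_1$ is odd, such a field contains no odd-order root of unity of order not dividing $m_1$, so $\ord(u_1) \mid m_1$, and therefore $m = \lcm(\ord u_1,\ord u_2,\ord u_3) = m_1$.

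It remains to show $m_1 \in \{7,9\}$. From $m_1 \mid 63$ and $m_1 \neq 1$ (the $w_j$ are not all equal), and $m_1 \neq 3$ (else $c_1/c_2 = (w_2^2-w_3^2)/(w_2(w_3^2-1)) \in \Q(\zeta_3)$ would be a degree $\leq 2$ element of the cubic field $K$, hence in $\Q$), the surviving possibilities are $m_1 \in \{7,9,21,63\}$; and $3 \nmid m_1 \Rightarrow m_1 = 7$ while $7 \nmid m_1 \Rightarrow m_1 = 9$. The value $m_1 = 21$ is ruled out immediately by Theorem \ref{thm:DZ}, since $\sum_{p\,\|\,21}\big(\tfrac{p-1}{\gcd(d,p-1)}-1\big) \geq 2 > 1 = k-2$ for both $d = 1$ and $d = 3$. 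The case $m_1 = 63$ is the main obstacle: then $K$ is a cubic subfield of $\Q(u_1,u_2,u_3) = \Q(\zeta_{63})$, so its conductor is $7$, $9$, or $63$, and I would treat each by expanding the primitive relation $c_1u_1 + c_2u_2 + c_3u_3 = 0$ in a suitable cyclotomic basis — over $\Q(\zeta_7)$ in powers of $\zeta_9$ when $f(K)=7$ (where the only short relations are the $\mu_3$-orbit relations from $\Phi_9 = \Phi_3(x^3)$), over $\Q(\zeta_9)$ in powers of $\zeta_7$ when $f(K)=9$ (where $\Phi_7$ admits no short relations), and via the action of $\Gal(\Q(\zeta_{63})/K)$ when $f(K)=63$ — to deduce that two of $u_1,u_2,u_3$ share a $\mu_9$-part or a $\mu_7$-part, which again forces some $c_j/c_k$ to be a real root of unity, contradicting $\Q$-independence. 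Running this analysis through the finitely many sign patterns $(\delta_1,\delta_2,\delta_3)$ (which only perturb $u_1^4 A + B = 0$) finishes the proof: $m = m_1 \in \{7,9\}$, so $n \in \{2^3\cdot 7,\, 2^3\cdot 9\}$.
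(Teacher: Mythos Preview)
Your route differs from the paper's. After the same normalization, the paper exploits the Frobenius-type automorphism $\sigma:y_j\mapsto y_j^2$: it shows $K$ is not $\sigma$-fixed, derives pairwise length-$4$ relations $(y_j+y_j^{-1})+c_{jk}(y_k+y_k^{-1})=0$, applies Theorem~\ref{thm:DZ} to those, and finishes with a short computer search over pairs of roots of unity of order dividing $63$. Your idea --- apply Theorem~\ref{thm:DZ} directly to the primitive length-$3$ relation $c_1u_1+c_2u_2+c_3u_3=0$ to bound $m_1=\lcm(\ord(u_2/u_1),\ord(u_3/u_1))$, and then use the equation coming from (\ref{eq:res2}) to force $\ord(u_1)\mid m_1$ --- is a legitimate and more hands-on alternative that would avoid the computer search.

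However, there are genuine gaps. The main one is the elimination of $m_1=63$. For $f(K)\in\{7,9\}$, expanding $\sum c_ju_j=0$ in a transversal cyclotomic basis produces more subcases than you indicate (e.g.\ when some exponent lies in the range where $\Phi_9$ or $\Phi_7$ rewrites it as a sum of lower powers), and your stated conclusion that ``two of $u_1,u_2,u_3$ share a $\mu_9$-part or a $\mu_7$-part'' does not by itself force any $c_j/c_k$ to be a real root of unity; you still have to combine both length-$3$ relations, and the outcome is not automatic. For $f(K)=63$ there are two candidate cubic subfields of $\Q(\zeta_{63})$, and ``via the action of $\Gal(\Q(\zeta_{63})/K)$'' is a placeholder, not an argument. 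Since this is exactly the case Theorem~\ref{thm:DZ} does \emph{not} exclude for a length-$3$ relation with $d=3$, it needs to be done in full.

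Secondary gaps: the identity $e_2e_3+e_1=0$ and hence your exclusion of $e_1=0$ are established only in the symmetric case $\delta_1=\delta_2=\delta_3$; you then invoke $e_1\neq 0$ inside the degenerate-case argument for \emph{general} sign patterns, which is circular as written. The degenerate case $A=B=0$ itself is only treated in the symmetric pattern, and ``running through the finitely many sign patterns'' hides nontrivial case analysis. Finally, the distinctness of the $u_j$ does not follow directly from the distinctness of $\pm1,x_1^{\pm1},x_2^{\pm1},x_3^{\pm1}$ (different $x_j$ can share their odd part when the $\zeta_8$-parts differ); it does follow, but from the two length-$3$ relations together with the $\Q$-independence of $c_1,c_2,c_3$.
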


\begin{proof}
Write $x_j = \zeta_{2^3}^{n_j}y_j$ with $\ord(y_j)$ odd. By the proof of Lemma \ref{lem:2pow4}, after replacing some of the $x_j$ with $x_j^{-1}$, and replacing some of the $c_j$ with $-c_j$, we have $n_1 = n_2 = n_3 \in \Z/2^3$ and $c_1 x_1^r + c_2 x_2^r + c_3 x_3^r = 0$ for $r \in \{\pm 1\}$. Since $\zeta_{2^3}^{2n_1} \in \{\pm i\}$, we can rewrite equation (\ref{eq:res2}) as
\begin{equation} \label{eq:2pow3}
c_1 (y_1^2 + y_1^{-2}) + c_2 (y_2^2 + y_2^{-2}) + c_3 (y_3^2 + y_3^{-2}) = 0 .
\end{equation}

There is a field automorphism $\sig$ of $\Q(x_1,x_2,x_3)$ such that $\sig(\zeta_{2^3}) = \zeta_{2^3}$ and $\sig(y_j) = y_j^2$ for all $j$. Suppose that $K$ is contained in the fixed field of $\sig$. By applying $\sig$ and $\sig^2$ to the relation $c_1 x_1 + c_2 x_2 + c_3 x_3 = 0$, and then multiplying by appropriate powers of $\zeta_{2^3}$, we get
\be
c_1 x_1^{2^m} + c_2 x_2^{2^m} + c_3 x_3^{2^m} = 0
\ee
for $m \in \{0,1,2\}$. Since $c_1,c_2,c_3$ are nonzero, the matrix $M_{jk} = x_j^{2^{k-1}}$, $j,k \in \{1,2,3\}$, has vanishing determinant, which gives us
\be
0 = \det(M) = x_1 x_2 x_3 (x_1 - x_2) (x_2 - x_3) (x_3 - x_1) (x_1 + x_2 + x_3) .
\ee
Since $x_1,x_2,x_3$ are distinct and nonzero, it must be that $x_1 + x_2 + x_3 = 0$. By applying a field automorphism of $\Q(x_1,x_2,x_3)$ to the relation $1 + x_2/x_1 = -x_3/x_1$, we see that $1 + e^{2\pi i / m}$ is a root of unity, where $m = \ord(x_2/x_1)$. Since $m > 2$, it must be that $m = 3$, so after permuting indices, $x_2 = \zeta_3 x_1$ and $x_3 = \zeta_3^{-1} x_1$. Then the relations $c_1 + c_2\zeta_3 + c_3\zeta_3^{-1} = 0$ and $c_1 + c_2\zeta_3^{-1} + c_3\zeta_3 = 0$ imply $c_1 = c_2 = c_3$, contradicting the $\Q$-linear independence of $c_1,c_2,c_3$. Thus, $K$ is not contained in the fixed field of $\sig$.

Since $K$ has prime degree over $\Q$, the intersection of the fixed field of $\sig$ with $K$ must be $\Q$. Replace $c_j$ with $c_j/c_1$ for all $j$, so that $c_2,c_3 \notin \Q$. By applying $\sig$ to the relation $y_1^r + c_2y_2^r + c_3y_3^r = 0$ for $r \in \{\pm 1\}$, we get
\be
y_1^2 + \sig(c_2)y_2^2 + \sig(c_3)y_3^2 = 0, \quad y_1^{-2} + \sig(c_2)y_2^{-2} + \sig(c_3)y_3^{-2} = 0 .
\ee
and subtracting from equation (\ref{eq:2pow3}) gives us
\be
(c_2 - \sig(c_2))(y_2^2 + y_2^{-2}) + (c_3 - \sig(c_3))(y_3^2 + y_3^{-2}) = 0 .
\ee
We can then apply $\sig^{-1}$ to get an equation in $y_2^{\pm 1}, y_3^{\pm 1}$. We can carry out this procedure for each pair $j \neq k$ in $\{1,2,3\}$ to get
\begin{equation} \label{eq:cjk}
(y_j + y_j^{-1}) + c_{jk} (y_k + y_k^{-1}) = 0
\end{equation}
for some nonzero $c_{jk} \in K$. Note that since $x_1^{\pm 1}, x_2^{\pm 2}, x_3^{\pm 3}$ are distinct and $n_1 = n_2 = n_3$, $y_1^{\pm 1}, y_2^{\pm 1}, y_3^{\pm 1}$ are distinct. Since $\ord(y_j)$ is odd for all $j$, the $K$-relations in (\ref{eq:cjk}) must be primitive. Then Theorem \ref{thm:DZ} with $k = 4$ and $d \mid 3$ implies $\ord(y_j) \mid 3^2 \cdot 7$ for all $j$.

We use a computer search to find all tuples $(y_1,y_2)$ with $y_1 \neq y_2^{\pm 1}$, $\ord(y_1) \mid 3^2 \cdot 7$ and $\ord(y_2) \mid 3^2 \cdot 7$, such that $(y_1 + y_1^{-1})/(y_2 + y_2^{-1})$ is either rational or cubic. We find that for all such tuples, we have $\gcd(\ord(y_1),\ord(y_2)) \in \{7,9\}$, or after permuting indices $\ord(y_1) = 7$ and $\ord(y_2) = 3$. In the latter case, by considering the tuples $(y_2,y_3)$ and $(y_1,y_3)$ we see that $\ord(y_1) = \ord(y_3) = 7$. The relations $y_1^r + c_2y_2^r + c_3y_3^r = 0$ for $r \in \{\pm 1\}$ then imply
\be
1 = (c_2 y_2 + c_3 y_3)(c_2 y_2^{-1} + c_3 y_3^{-1}) = c_2^2 + c_3^2 + c_2 c_3 (y_2 y_3^{-1} + y_2^{-1} y_3) .
\ee
Then $y_2 y_3^{-1} + y_2^{-1} y_3 \in K$, but $\ord(y_2 y_3^{-1}) = 3 \cdot 7$ implies $y_2 y_3^{-1} + y_2^{-1} y_3$ has degree $6$ over $\Q$, a contradiction. We conclude that $\gcd(\ord(y_j) : j \in \{1,2,3\}) \in \{7,9\}$, and thus $n \in \{2^3 \cdot 7, 2^3 \cdot 9\}$.
\end{proof}

\begin{lem} \label{lem:2pow2}
If $2^2 \mid \ord(x_j)$ and $2^3 \nmid \ord(x_j)$ for all $j \in \{1,2,3\}$, then $n \in \{2^2 \cdot 7, 2^2 \cdot 9\}$.
\end{lem}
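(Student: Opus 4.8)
The plan is to mirror the proof of Lemma \ref{lem:2pow3}. Using the hypothesis $2^2 \mid \ord(x_j)$ and $2^3 \nmid \ord(x_j)$ for all $j$, first I would write $x_j = \zeta_4^{n_j} y_j$ with $\ord(y_j)$ odd; each $n_j$ is then odd, so as in the proof of Lemma \ref{lem:2pow4} — and using that $\Q(\zeta_4)$ and $\Q(y_1,y_2,y_3)$ are linearly disjoint over $\Q$ — after replacing some $x_j$ by $x_j^{-1}$ and the corresponding $c_j$ by $-c_j$ we may assume $x_j = i y_j$ for all $j$. Since $x_j - x_j^{-1} = i(y_j + y_j^{-1})$ and $x_j^2 - x_j^{-2} = -(y_j^2 - y_j^{-2})$, equations (\ref{eq:res1}) and (\ref{eq:res2}) become
\be
\sum_{j=1}^3 c_j(y_j + y_j^{-1}) = 0 \quad\text{and}\quad \sum_{j=1}^3 c_j(y_j^2 - y_j^{-2}) = 0 .
\ee
Because $f(K)$ is odd and $K \subseteq \Q(x_1,x_2,x_3) = \Q(i)\cdot\Q(y_1,y_2,y_3)$, we get $K \subseteq \Q(y_1,y_2,y_3) \subseteq \Q(\zeta_M)$, where $M = \lcm(\ord(y_1),\ord(y_2),\ord(y_3))$, so that $n = 4M$ and it suffices to prove $M \in \{7,9\}$. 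Let $\sigma$ be the automorphism of $\Q(x_1,x_2,x_3)$ fixing $i$ and squaring each $y_j$, and let $\tau = \sigma|_K$, which is either trivial or a generator of $\Gal(K/\Q) \cong \Z/3$.

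If $\tau$ is trivial, applying $\sigma$ to the first equation gives $\sum_j c_j(y_j^2 + y_j^{-2}) = 0$; combining with the second gives $\sum_j c_j y_j^2 = 0$ and $\sum_j c_j y_j^{-2} = 0$, and applying $\sigma^{-1}$ gives $\sum_j c_j y_j = 0$ and $\sum_j c_j y_j^{-1} = 0$. Thus $\sum_j c_j y_j^r = 0$ for $r \in \{\pm1,\pm2\}$, with $y_1,y_2,y_3$ distinct (since the $x_j$ are distinct), and the argument in the proof of Lemma \ref{lem:bound} forces $c_1 = c_2 = c_3$, contradicting the $\Q$-linear independence of $c_1,c_2,c_3$. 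Hence $\tau$ is a generator of $\Gal(K/\Q)$; since $[K:\Q] = 3$ is prime, $K$ meets the fixed field of $\sigma$ only in $\Q$, and after normalizing $c_1 = 1$ we have $c_2,c_3 \notin\Q$ and so $\tau(c_j)\neq c_j$ for $j=2,3$. Applying $\sigma^{-1}$ to the second equation yields a companion relation $\sum_j \tau^{-1}(c_j)(y_j - y_j^{-1}) = 0$ among the same roots of unity $y_1^{\pm1},y_2^{\pm1},y_3^{\pm1}$. I would then take $K$-linear combinations of this companion relation and the first equation, together with case analysis depending on whether these relations (and the combinations) are primitive, to produce short $K$-relations among odd-order roots of unity of order dividing $M$. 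In each imprimitive sub-case a vanishing proper subsum forces either $y_j^2 = -1$ (impossible since $\ord(y_j)$ is odd) or $c_j/c_k$ to be a real — hence $\pm1$ — root of unity (contradicting $\Q$-linear independence), leaving a residual relation of length at most $5$; Theorem \ref{thm:DZ} with $k\le 5$ and $d\mid3$, together with Lemma \ref{lem:bound}, then bounds each $\ord(y_j)$ by an explicit small integer. A computer search over the finitely many resulting triples $(\ord(y_1),\ord(y_2),\ord(y_3))$ compatible with the two equations and with $K$ cubic — carried out exactly as at the end of the proof of Lemma \ref{lem:2pow3} — yields $M \in \{7,9\}$, hence $n \in \{2^2\cdot7,\,2^2\cdot9\}$.

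I expect the case $\tau \neq \mathrm{id}$ to be the main obstacle. In the $2^3$ setting of Lemma \ref{lem:2pow3}, $\zeta_8$ splits the real part of equation (\ref{eq:res1}) into two independent relations, which together with the transformed version of equation (\ref{eq:res2}) yields three relations among the $y_j^{\pm1}$ and hence a clean length-$4$ relation $(y_j+y_j^{-1}) + c_{jk}(y_k+y_k^{-1}) = 0$. Here $\zeta_4$ does not split equation (\ref{eq:res1}), so I only have two relations among the $y_j^{\pm1}$ — the first equation and its $\sigma^{-1}$-companion — and a single $K$-linear combination kills only one of $y_1, y_1^{-1}$, leaving length $5$. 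The real work is therefore in organizing the primitivity casework so that the surviving relations are short enough (length $\le 5$, ideally $4$) for Theorem \ref{thm:DZ} to give a bound on the $\ord(y_j)$ small enough for the computer search; I would handle the imprimitive branches first, since each collapses to a short relation among odd-order roots of unity, and run the search only in the remaining fully primitive branch.
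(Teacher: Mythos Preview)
Your setup and the $\tau=\id$ case are fine, but the $\tau\neq\id$ case is not actually carried out: you only outline ``take $K$-linear combinations, do primitivity casework, get length $\le 5$, apply Theorem \ref{thm:DZ}''. Even granting a primitive length-$5$ $K$-relation with $d\mid 3$, the bound from Theorem \ref{thm:DZ} still permits a prime factor of $13$ in $M$, so you would not land directly on $M\in\{7,9\}$ without further work; and the casework itself is left as a promise.

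The paper sidesteps this entirely with a different idea. Since $K/\Q$ is Galois of degree $3$, the restriction $\sigma|_K$ has order dividing $3$, so $\sigma^3$ \emph{always} fixes $K$---no case split on $\tau$ is needed. Applying $\sigma^3$ to the first displayed equation gives a third relation
\be
c_1(y_1^8+y_1^{-8})+c_2(y_2^8+y_2^{-8})+c_3(y_3^8+y_3^{-8})=0
\ee
with the \emph{same} coefficients $c_1,c_2,c_3$. Since $(c_1,c_2,c_3)\neq 0$, the $3\times 3$ matrix with columns $(y_j+y_j^{-1},\,y_j^2-y_j^{-2},\,y_j^8+y_j^{-8})$ has vanishing determinant. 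That determinant is a $\Q$-relation (not a $K$-relation) among roots of unity of odd order; by Lemma \ref{lem:bound} the orders already divide $3^2\cdot7\cdot13$, so one simply computer-searches all tuples $(y_1,y_2,y_3)$ in this finite range for solutions of the determinant equation, discards those with $y_1^3=y_2^3=y_3^3$ (ruled out as in Lemma \ref{lem:bound}), and checks which remaining tuples make $c_j/c_k$ cubic via (\ref{eq:cjck}). This yields $M\in\{7,9\}$ with no primitivity analysis at all. The $\sigma^3$ trick is what you are missing.
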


\begin{proof}
By the proof of Lemma \ref{lem:2pow4}, after replacing some of the $x_j$ with $x_j^{-1}$, and replacing the corresponding $c_j$ with $-c_j$, we can write $x_j = iy_j$ with $\ord(y_j)$ odd for all $j$. Then we can rewrite equations (\ref{eq:res1}) and (\ref{eq:res2}) as
\begin{align*}
c_1(y_1 + y_1^{-1}) + c_2(y_2 + y_2^{-1}) + c_3(y_3 + y_3^{-1}) &= 0 \label{eq:2pow2} \\
c_1(y_1^2 - y_1^{-2}) + c_2(y_2^2 - y_2^{-2}) + c_3(y_3^2 - y_3^{-2}) &= 0 .
\end{align*}
There is a field automorphism $\sig$ of $\Q(y_1,y_2,y_3)$ such that $\sig(y_j) = y_j^2$ for all $j$. Since $K$ is a cubic Galois extension of $\Q$, the fixed field of $\sig^3$ contains $K$. Applying $\sig^3$ to the first equation above gives us
\be
c_1(y_1^8 + y_1^{-8}) + c_2(y_2^8 + y_2^{-8}) + c_3(y_3^8 + y_3^{-8}) = 0 .
\ee
Let ${\rm Sym}(3)$ be the group of permutations of $\{1,2,3\}$, and for $p \in {\rm Sym}(3)$, let $\sgn(p)$ denote the sign of the permutation of $p$. Since $c_1,c_2,c_3$ are nonzero, we have a vanishing determinant
\begin{equation} \label{eq:det128}
0 = \det
\left(\begin{array}{ccc}
   y_1 + y_1^{-1} & y_2 + y_2^{-1} & y_3 + y_3^{-1} \\
   y_1^2 - y_1^{-2} & y_2^2 - y_2^{-2} & y_3^2 - y_3^{-2} \\
   y_1^8 + y_1^{-8} & y_2^8 + y_2^{-8} & y_3^8 + y_3^{-8}
\end{array}\right)
= \sum_{\substack{p \in {\rm Sym}(3) \\ \eps_1,\eps_2,\eps_3 \in \{\pm 1\}}} {\rm sgn}(p) \cdot \eps_2 \cdot y_{p(1)}^{\eps_1} y_{p(2)}^{2 \eps_2} y_{p(3)}^{8 \eps_3} .
\end{equation}
Since $\ord(y_j)$ is odd, by Lemma \ref{lem:bound} we have $\ord(y_j) \mid 3^2 \cdot 7 \cdot 13$ for all $j$. Let $n_0 = 3^2 \cdot 7 \cdot 13$, let $\zeta$ be a primitive $n_0$-th root of unity, and write $y_j \in \zeta^{m_j}$ with $0 \leq m_j < n_0$. By permuting indices, we may assume that
\be
\gcd(m_1,n_0) = \min(\gcd(m_j,n_0) : j \in \{1,2,3\}) .
\ee
Then by applying a field automorphism of $\Q(y_1,y_2,y_3)$, we may assume $m_1 \mid n_0$. Lastly, we may also assume $m_2 < m_3$. Note that since $\pm 1, x_1^{\pm 1}, x_2^{\pm 1}, x_3^{\pm 1}$ are distinct, $y_1^{\pm 1}, y_2^{\pm 1}, y_3^{\pm 1}$ are distinct.

We use a computer search to find all such tuples $(y_1,y_2,y_3)$ satisfying equation (\ref{eq:det128}). We find that for every such tuple, either $y_1^3 = y_2^3 = y_3^3$, or $\ord(y_j) \in \{7,9\}$ for all $j$. The first case is ruled out by the proof of Lemma \ref{lem:bound}. In the second case, we find that the only tuples for which the right-hand side in equation (\ref{eq:cjck}) is cubic satisfy either $\ord(y_j) \mid 7$ for all $j \in \{1,2,3\}$, or $\ord(y_j) \mid 9$ for all $j \in \{1,2,3\}$. Thus, $n \in \{2^2 \cdot 7, 2^2 \cdot 9\}$.
\end{proof}

\begin{lem} \label{lem:2pow1}
Suppose that $2^2 \nmid \ord(x_j)$ for all $j \in \{1,2,3\}$. Then $n \in \{7, 2 \cdot 7, 9, 2 \cdot 9\}$.
\end{lem}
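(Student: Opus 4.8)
The plan is to pass from the $\ord(x_j)$ to their odd parts and then, after reducing via the automorphism $y_j\mapsto y_j^2$, finish with a computer search of the same flavour as in Lemmas \ref{lem:2pow3} and \ref{lem:2pow2}. Since $2^2\nmid\ord(x_j)$, each $\ord(x_j)$ is odd or twice an odd number, so we may write $x_j=\eps_jy_j$ with $\eps_j\in\{\pm1\}$ and $\ord(y_j)$ odd; then $\ord(x_j)\in\{\ord(y_j),2\ord(y_j)\}$. Setting $D=\lcm(\ord(y_1),\ord(y_2),\ord(y_3))$ we get $n\in\{D,2D\}$, so it is enough to show $D\in\{7,9\}$. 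As $x_j^2=y_j^2$, equation (\ref{eq:res2}) reads $\sum_j c_j(y_j^2-y_j^{-2})=0$ and (\ref{eq:res1}) reads $\sum_j \eps_j c_j(y_j-y_j^{-1})=0$; both have $\Q$-linearly independent coefficients in $K$. Lemma \ref{lem:bound} and the oddness of $D$ give $D\mid 3^2\cdot7\cdot13$. Because $\gcd(2,D)=1$, the map $\zeta_D\mapsto\zeta_D^2$ extends to $\sig\in\Gal(\Q(\zeta_D)/\Q)$ with $\sig(y_j)=y_j^2$ for all $j$; since $f(K)$ is odd we have $K\subseteq\Q(\zeta_D)$, and as $K$ is cubic and abelian, $\sig|_K$ has order $1$ or $3$.

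First I would rule out $\sig|_K=\id$. Then $\sig$ fixes each $c_j$, and applying $\sig$ to (\ref{eq:res1}) and subtracting from (\ref{eq:res2}) gives $\sum_j(1-\eps_j)c_j(y_j^2-y_j^{-2})=0$. If exactly one $\eps_j=-1$ this is $2c_j(y_j^2-y_j^{-2})=0$, impossible since $c_j\neq0$ and $\ord(y_j)$ is odd and $>1$; if exactly two $\eps_j=-1$, the resulting relation together with (\ref{eq:res2}) forces $c_k(y_k^2-y_k^{-2})=0$ for the remaining index, again impossible. So all $\eps_j$ agree, and replacing $x_j$ by $y_j$ turns (\ref{eq:res1}), (\ref{eq:res2}) into $\sum_j c_j(y_j-y_j^{-1})=0$ and $\sum_j c_j(y_j^2-y_j^{-2})=0$. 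With $w_j=c_j(y_j-y_j^{-1})\neq0$ and $u_j=y_j+y_j^{-1}$ (distinct because $y_1^{\pm1},y_2^{\pm1},y_3^{\pm1}$ are), these two equations and the $\sig$-image of the second give $\sum_j w_j=\sum_j w_ju_j=\sum_j w_ju_j^3=0$, and the corresponding Vandermonde-type determinant is a nonzero multiple of $u_1+u_2+u_3$; hence $u_1+u_2+u_3=0$. By the classification of vanishing sums of roots of unity, a vanishing sum of the six distinct odd-order roots $y_1^{\pm1},y_2^{\pm1},y_3^{\pm1}$, none equal to $\pm1$, must be a union of two rotated cube-root-of-unity relations, which forces (after permuting indices and inverting some $y_j$) $y_2=\zeta_3y_1$ and $y_3=\zeta_3^{-1}y_1$; substituting into (\ref{eq:res1}) contradicts the $\Q$-linear independence of $c_1,c_2,c_3$. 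Thus $\sig|_K$ has order $3$, so $\sig$ generates $\Gal(K/\Q)$ and $\sig^3$ fixes $K$.

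Next I would exploit $\sig^3$. Applying $\sig^3$ and $\sig^6$ to (\ref{eq:res2}) gives $\sum_j c_j(y_j^{16}-y_j^{-16})=0$ and $\sum_j c_j(y_j^{128}-y_j^{-128})=0$; since $c_1,c_2,c_3$ are nonzero, the $3\times3$ determinant $\det\bigl(y_j^{2\cdot8^k}-y_j^{-2\cdot8^k}\bigr)_{0\leq k\leq2,\,1\leq j\leq3}$ vanishes. Crucially, this step does not involve the signs $\eps_j$, because (\ref{eq:res2}) and all its $\sig^{3k}$-images have coefficients $c_j$. Since $\ord(y_j)\mid 3^2\cdot7\cdot13$, writing $y_j=\zeta_{n_0}^{m_j}$ with $n_0=3^2\cdot7\cdot13$ and normalizing by a Galois automorphism so that $m_1\mid n_0$, I would run a computer search over all such triples satisfying the determinant identity, exactly as in the proofs of Lemmas \ref{lem:2pow3} and \ref{lem:2pow2}. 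The expected outcome is that every such triple has either $y_1^3=y_2^3=y_3^3$ — ruled out by the argument in the proof of Lemma \ref{lem:bound} — or $\ord(y_j)\in\{7,9\}$ for all $j$; imposing in addition that the ratios $c_j/c_k$ obtained from (\ref{eq:cjck}) (for each sign choice $x_j=\pm y_j$) generate a cubic field should then leave only $\ord(y_j)\mid7$ for all $j$ or $\ord(y_j)\mid9$ for all $j$, so $D\in\{7,9\}$ and $n\in\{7,14,9,18\}$.

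The main obstacle, compared with Lemmas \ref{lem:2pow3} and \ref{lem:2pow2} where the relation $x_j=iy_j$ holds uniformly, is the bookkeeping caused by the possibly non-uniform signs $\eps_j$: the pair (\ref{eq:res1}), (\ref{eq:res2}) becomes ``twisted''. The key point making this tractable is that (\ref{eq:res2}) is itself sign-free, so the order-$3$ determinant argument is unaffected by the twist, and the $\eps_j$ enter only in eliminating the order-$1$ case and in the final cubicity test through (\ref{eq:cjck}). A secondary difficulty is keeping the computer search feasible, which relies on the bound $\ord(y_j)\mid 3^2\cdot7\cdot13$ from Lemma \ref{lem:bound} and on Galois-symmetry normalization of the triples.
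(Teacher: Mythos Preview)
Your overall plan---pass to odd-order $y_j$, use the squaring automorphism $\sig$, and split on whether $\sig|_K$ is trivial---matches the paper. The $\sig|_K=\id$ case is essentially fine: you correctly reach $u_1+u_2+u_3=0$, and although you omit the primitive length-$6$ subcase and the $3{+}3$ split of type $y_j+y_j^{-1}+y_k=0$, both are easily handled as in the paper.

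The real gap is in the order-$3$ case. Your determinant uses only (\ref{eq:res2}) together with its images under $\sig^3$ and $\sig^6$. But the multiplicative order of $2$ modulo $D$ divides $6$ whenever $D\mid 63$, so $\sig^6$ is the identity on $\Q(\zeta_D)$ and two of your three rows coincide; for $D=13$ one has $2^6\equiv -1\pmod{13}$, so the third row is the negative of the first. Hence your determinant vanishes identically for every triple with all $\ord(y_j)\mid 63$ and for every triple with all $\ord(y_j)\mid 13$, and the ``expected outcome'' you state is simply wrong: the determinant alone does not cut the list down anywhere near $\ord(y_j)\in\{7,9\}$. You could salvage the approach by folding (\ref{eq:res1}) into the determinant and looping over all sign patterns $(\eps_1,\eps_2,\eps_3)$, or by pushing everything into the cubicity filter, but that is not the plan you described.

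The paper avoids any search in this lemma. After normalizing $c_1=1$, it applies $\sig^{-1}$ to (\ref{eq:res2}) and adds or subtracts the result from (\ref{eq:res1}) (the sign chosen so that the $j=1$ terms cancel, using $\sig^{-1}(c_1)=1$). Since $\sig^{-1}(c_j)\neq\pm c_j$ for $j=2,3$, the surviving two-term relation gives $x_j-x_j^{-1}=a_{jk}(x_k-x_k^{-1})$ with $a_{jk}\in K^\times$ for each pair $j\neq k$. Applying $\sig$ and dividing yields $x_j+x_j^{-1}=b_{jk}(x_k+x_k^{-1})$ with $b_{jk}\in K$; multiplying the resulting expressions for $x_j$ and $x_j^{-1}$ then shows $x_k^2+x_k^{-2}\in K$, hence $x_j+x_j^{-1}\in K$ for all $j$. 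Since $K$ is cubic and $\ord(x_j)$ is odd, this forces $\ord(x_j)\mid 7$ for all $j$ or $\ord(x_j)\mid 9$ for all $j$, with no computation. The non-uniform signs $\eps_j$ that obstruct your determinant enter here only as harmless $\pm$ choices in the coefficients $a_{jk}$.
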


\begin{proof}
After replacing some of the $x_j$ with $-x_j$, we may assume that $\ord(x_j)$ is odd for all $j$. Equation (\ref{eq:res2}) is unchanged in the process. Then there is a field automorphism $\sig$ of $\Q(x_1,x_2,x_3)$ such that $\sig(x_j) = x_j^2$ for all $j$. If $K$ is contained in the fixed field of $\sig$, then by applying $\sig$ and $\sig^{-1}$ to equation (\ref{eq:res2}), we get
\be
c_1 (x_1^{2^m} - x_1^{-2^m}) + c_2 (x_2^{2^m} - x_2^{-2^m}) + c_3 (x_3^{2^m} - x_3^{-2^m}) = 0
\ee
for $m \in \{0,1,2\}$. Since $c_1,c_2,c_3$ are nonzero, the matrix $M_{jk} = x_j^{2^{k-1}} - x_j^{2^{k-1}}$, $j,k \in \{1,2,3\}$, has vanishing determinant, which gives us
\be
0 = \det(M) = \prod_{j=1}^3 (x_j - x_j^{-1}) \prod_{k=1}^3 (x_k + x_k^{-1} - x_{k+1} - x_{k+1}^{-1}) \sum_{\ell=1}^3 (x_\ell + x_\ell^{-1}) .
\ee
Then since $\pm 1, x_1^{\pm 1}, x_2^{\pm 2}, x_3^{\pm 3}$ are distinct, it must be that
\be
x_1 + x_1^{-1} + x_2 + x_2^{-1} + x_3 + x_3^{-1} = 0 .
\ee
Since $\ord(x_j)$ is odd for all $j$, this $\Q$-relation does not contain a primitive $\Q$-relation of length $2$. If it contains a primitive $\Q$-relation of length $3$, then after permuting indices and replacing some of the $x_j$ with $x_j^{-1}$, either $x_1 + x_1^{-1} + x_2 = 0$ or $x_1 + x_2 + x_3 = 0$. In either case, we have $\ord(x_2/x_1) = \ord(x_3/x_1) = 3$, which is impossible by the proof of Lemma \ref{lem:bound}. Lastly, if this $\Q$-relation is primitive, then since $\ord(x_j)$ is odd for all $j$, by Theorem \ref{thm:DZ} with $k = 6$ and $d = 1$, we have $\ord(x_j) \mid 3 \cdot 5$ for all $j$. Then by Lemma \ref{lem:bound}, we have $\ord(x_j) \mid 3$ for all $j$, contradicting that $x_1^{\pm 1}, x_2^{\pm 1}, x_3^{\pm 1}$ are distinct. Thus, $K$ is not contained in the fixed field of $\sig$.

Since $K$ has prime degree over $\Q$, the intersection of the fixed field of $\sig$ with $K$ must be $\Q$. Replace $c_j$ with $c_j/c_1$ for all $j$, so that $c_2,c_3 \notin \Q$. By applying $\sig^{-1}$ to equation (\ref{eq:res2}), and adding or subtracting from equation (\ref{eq:res1}) according to whether or not we replaced $x_1$ with $-x_1$ at the beginning of the proof, we get
\be
(\sig^{-1}(c_2) + \eps_2 c_2)(x_2 - x_2^{-1}) + (\sig^{-1}(c_3) + \eps_3 c_3)(x_3 - x_3^{-1}) = 0
\ee
for some $\eps_2,\eps_3 \in \{\pm 1\}$. We can carry out this procedure for each pair $j \neq k$ in $\{1,2,3\}$ to get
\be
x_j - x_j^{-1} = a_{jk} (x_k - x_k^{-1})
\ee
for some nonzero $a_{jk} \in K$. By applying $\sig$, we get an equation in $x_j^{\pm 2}$ and $x_k^{\pm 2}$, and by dividing by the equation above, we get
\be
x_j + x_j^{-1} = b_{jk}(x_k + x_k^{-1})
\ee
with $b_{jk} = \sig(a_{jk})/a_{jk} \in K$. Since $\ord(x_j)$ is odd for all $j$, we have $a_{jk} \neq \pm b_{jk}$. By adding and subtracting the above $2$ equations, we can write $x_j^{\pm 1}$ in terms of $x_k^{\pm 1}$. Multiplying the two resulting expressions gives us
\be
1 = x_j x_j^{-1} = \frac{b_{jk}^2}{4}(x_k + x_k^{-1})^2 - \frac{a_{jk}^2}{4}(x_k - x_k^{-1})^2
\ee
and it follows that
\be
x_k^2 + x_k^{-2} = \frac{2(a_{jk} + b_{jk}) - 4}{a_{jk} - b_{jk}} \in K .
\ee
By applying $\sig^{-1}$, we get that $x_j + x_j^{-1} \in K$ for all $j$. Since $K$ is cubic and $\ord(x_j)$ is odd for all $j$, either $\ord(x_j) \mid 7$ for all $j$, or $\ord(x_j) \mid 9$ for all $j$. Thus, $n \in \{7, 2 \cdot 7, 9, 2 \cdot 9\}$.
\end{proof}

\begin{proof} (of Theorem \ref{thm:bound})
By Lemmas \ref{lem:bound}, \ref{lem:2pow4}, \ref{lem:2pow3}, \ref{lem:2pow2}, and \ref{lem:2pow1}, we have $n \mid 2^3 \cdot 7$ or $n \mid 2^3 \cdot 9$. We use a computer search to find all tuples $(x_1,x_2,x_3)$ such that either $\ord(x_j) \mid 2^3 \cdot 7$ for all $j$ or $\ord(x_j) \mid 2^3 \cdot 9$ for all $j$, and such that $x_1,x_2,x_3$ and the associated circumferences $c_1,c_2,c_3$ from (\ref{eq:cjck}) form an admissible solution to equations (\ref{eq:res1}) and (\ref{eq:res2}). Write $x_j = \exp(2\pi i n_j / n)$ with $0 \leq n_j < n$ and $\gcd(n_1,n_2,n_3,n) = 1$.

First, we consider conditions (2) and (3) from Theorem \ref{thm:irred}. These conditions are symmetric under permutations of $x_1,\dots,x_g$ (applying the same permutation to $c_1,\dots,c_g$ and $h_1,\dots,h_g$ as well) and under simultaneous inversion of $x_1,\dots,x_g$. Up to permutations and simultaneous inversion, the tuples satisfying these conditions are given by $(1,3,5)$ for $n = 7$, by $(1,5,11)$ for $n = 14$, and by $(1,5,14)$, $(1,6,15)$, $(1,8,11)$, $(1,8,16)$, $(2,6,15)$, $(2,7,13)$, $(2,7,14)$, $(3,6,13)$, $(3,6,14)$, $(3,7,12)$, $(3,8,12)$, $(4,8,13)$ for $n = 18$. There are no tuples satisfying these conditions for the other possible values of $n$.

Next, we additionally consider conditions (1) and (4) from Theorem \ref{thm:irred}, which are not symmetric. We find that one of the following holds.
\begin{enumerate}
    \item $n = 7$ and $(n_1,n_2,n_3)$ is one of
    \be
    (1,3,5), (1,5,3), (5,3,1) .
    \ee
    \item $n = 14$ and $(n_1,n_2,n_3)$ is one of
    \be
    (1,11,5), (11,5,1) .
    \ee
    \item $n = 18$ and $(n_1,n_2,n_3)$ is one of
    \begin{align*}
    & (1,5,14), (1,11,8), (1,14,5), (1,15,6), (1,16,8), (2,7,13), (2,13,7), (2,14,7), (2,15,6), \\ & (3,6,13), (3,7,12), (3,12,7), (3,12,8), (3,13,6), (3,14,6), (4,8,13), (4,13,8), (5,1,14), \\ & (5,14,1), (6,3,14), (6,13,3), (6,14,3), (7,2,14), (7,13,2), (7,14,2), (8,13,4), (12,7,3), \\ & (13,7,2), (14,1,5), (14,2,7), (14,5,1), (15,6,1).
    \end{align*}
\end{enumerate}
In particular, $n \in \{7,14,18\}$. If $n \in \{7,14\}$, then the unique cubic subfield of $\Q(\zeta_n)$ is $K = \Q(\cos(\pi/7))$, and if $n = 18$, then the unique cubic subfield of $\Q(\zeta_n)$ is $K = \Q(\cos(\pi/9))$.
\end{proof}


\section{Candidate Holomorphic $1$-Forms} \label{sec:main}

In this section, we apply Theorem \ref{thm:Veech} repeatedly to rule out all of the candidate stable forms from Section \ref{sec:irred} except those arising from the Veech $14$-gon, and we conclude the proof of Theorem \ref{thm:14gon}.

First, we define a notion of twist parameter of a horizontal cylinder specific to our situation. Suppose that $(X,\om) \in \Om\cM_g(g-1,g-1)^{\rm hyp}$ has an irreducible periodic horizontal direction as in Lemma \ref{lem:hypirred}. In particular, each horizontal cylinder has exactly $2$ saddle connections in each boundary, and every horizontal saddle connection has distinct endpoints. Let $C$ be a horizontal cylinder on $(X,\om)$, and let $c$ be its circumference. Choose a saddle connection $\gam \subset C \cup Z(\om)$ that is a loop and that crosses $C$ from bottom to top. The {\em twist parameter} of $C$ is $\re \int_\gam \om \in \R / c\Z$. In this setting, the twist parameter depends only on $C$ and not on the choice of $\gam$.

\begin{thm} \label{thm:cand}
Suppose $(X_0,\om_0) \in \Om\cM_3(2,2)^{\rm hyp}$ is an algebraically primitive Veech surface. There exists $(X,\om) \in \GL^+(2,\R) \cdot (X_0,\om_0)$ such that the horizontal direction is periodic and irreducible with cylinders $C_1,C_2,C_3$ with heights $h_1,h_2,h_3$, circumferences $c_1,c_2,c_3$, and twist parameters $t_1,t_2,t_3$, satisfying the following conditions.
\begin{enumerate}
    \item We have $c_1 = 1$, $h_1 = 1$, and the circumferences $1,c_2,c_3$ are part of an admissible solution from the list in the proof of Theorem \ref{thm:bound}.
    \item The twist parameters satisfy $t_1 \in \Q/\Z$, $t_2 = 0$, and $t_3 \in c_3 \Q / c_3 \Z$.
    \item With $s$ as in Theorem \ref{thm:irred}, there are integers $0 \leq p < q$ such that $0 \leq s - p/q < 1/q$ and
    \be
    \frac{(qs - p)(q + (q-p)h_2)}{((p+1) - qs)(q + (q-p-1)h_2)} \in \Q .
    \ee
    \item The numbers $s - 1, h_2, sh_2 + 1 \in K$ are $\Q$-linearly dependent.
\end{enumerate}
\end{thm}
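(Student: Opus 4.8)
The plan is to start from the normal form of Theorems~\ref{thm:irred} and~\ref{thm:bound} and then move the orbit representative by a diagonal matrix and a horizontal shear, reading off properties~(1)--(4) in turn from the Veech dichotomy (Theorem~\ref{thm:Veech}).

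\textbf{Property (1).} I would first take the surface furnished by Theorem~\ref{thm:irred}: a periodic irreducible horizontal direction, cylinders $C_1,C_2,C_3$ forming a chain, circumferences $c_j$, heights $h_j$. By Theorem~\ref{thm:bound} its roots of unity lie in the finite list from the proof of that theorem, so $(c_1,c_2,c_3)$ is a positive real multiple of one of finitely many rational tuples, and by Theorem~\ref{thm:ratpos} $(h_1,h_2,h_3)$ is a positive real multiple of the corresponding trace-dual basis. Applying the diagonal matrix $\mat{1/c_1}{0}{0}{1/h_1}$ scales horizontal lengths by $1/c_1$ and vertical lengths by $1/h_1$, preserves the chain combinatorics and the ratios of the moduli, and normalizes the circumferences and heights so that $c_1 = h_1 = 1$ with $(1,c_2,c_3)$ the matching list entry. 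Since the inequalities of Theorem~\ref{thm:irred}(4) and the formula~(\ref{eq:cjck}) are homogeneous under such a rescaling (with $s$ rescaled to match), they survive, giving~(1).

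\textbf{Property (2).} A horizontal shear $\mat{1}{u}{0}{1}$ fixes every horizontal circumference and height and replaces each twist $t_j$ by $t_j + uh_j \in \R/c_j\Z$, so taking $u = -t_2/h_2$ arranges $t_2 = 0$ without disturbing~(1). Once $t_2 = 0$, the loop computing the twist of $C_2$ has period $ih_2$, hence is a vertical saddle connection; by the Veech dichotomy the vertical direction is then periodic, so $X$ is a union of vertical cylinders and vertical saddle connections. For the rationality of $t_1$ and $t_3$ I would argue that a vertical leaf inside $C_1$ recurses through the self-gluing of $\gam_0$ by the circle rotation $x \mapsto x - t_1$ of $\R/\Z$ (and symmetrically for $C_3$ via $\gam_3$), and then combine the periodicity of the vertical foliation with the torsion of $Z_2 - Z_1$ (Theorem~\ref{thm:RMtorsion}) and the orientation-reversing reflection $f$ of Lemma~\ref{lem:hypirred} --- which fixes $\gam_0$ and $\gam_3$ pointwise --- to force this rotation to have rational angle. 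This yields $t_1 \in \Q/\Z$ and $t_3 \in c_3\Q/c_3\Z$.

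\textbf{Property (3).} With $t_2 = 0$ and $t_1,t_3 \in \Q$ in hand, the crux is an explicit description of the vertical cylinder decomposition. I expect that, using $t_2 = 0$ and the chain structure of Lemma~\ref{lem:hypirred}, the first-return map of the vertical flow --- a priori an interval exchange of the six horizontal saddle connections --- collapses into a pair of circle rotations, so that two of the vertical cylinders $V_1,V_2$ have moduli that can be read off from the rotation $x \mapsto x - t_1$ and from the passages through $C_2$. Concretely, I expect these moduli to take the form $(qs-p)/(q + (q-p)h_2)$ and $((p+1)-qs)/(q + (q-p-1)h_2)$ for positive integers $p<q$, where positivity of the two cylinder widths $qs-p$ and $(p+1)-qs$ forces $p=\lfloor qs\rfloor$, i.e.\ $0\le s-p/q<1/q$ (using $0<s<c_1=1$ from Lemma~\ref{lem:sc}). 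Then Theorem~\ref{thm:Veech}, applied to the periodic vertical direction, makes the ratio of the two moduli rational, which is the displayed relation of~(3). Carrying out this count rigorously --- tracking the vertical return map through $C_1,C_2,C_3$ with $t_2=0$ and verifying it really organizes into these cylinders with these circumferences and widths --- is the step I expect to be the main obstacle; the coarser form of the same analysis is also what finishes~(2).

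\textbf{Property (4).} This is formal algebra from~(3). Setting the displayed ratio equal to $\rho \in \Q$ and clearing denominators gives a relation $\alpha s + \beta\,(sh_2) + \gamma h_2 + \delta = 0$ with $\alpha,\beta,\gamma,\delta \in \Q$ polynomial in $p,q,\rho$; one checks that the constant satisfies $\delta = \beta - \alpha$ identically, so the relation reads $\alpha(s-1) + \beta(sh_2 + 1) + \gamma h_2 = 0$, and $(\alpha,\beta,\gamma) \neq (0,0,0)$ since $\alpha = q^2(1+\rho)$ and $\beta = q\bigl((q-p) + \rho(q-p-1)\bigr)$ cannot both vanish. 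Hence $s-1$, $h_2$, $sh_2+1$ are $\Q$-linearly dependent, which is~(4). Since $s$ and $h_2$ are determined by the list entry selected in~(1), this last condition can be verified list entry by list entry, which is what ultimately singles out the Veech $14$-gon.
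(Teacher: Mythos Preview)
Your overall architecture matches the paper's: normalize by a diagonal matrix, shear so that $t_2=0$, study the vertical direction, and extract a rationality constraint. Properties~(1) and the shear in~(2) are exactly as in the paper.

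The gap is in your justification of why the vertical first-return map splits into two circle rotations. You invoke torsion and the reflection $f$, but neither is needed and neither does the job. The paper's argument is short and structural: once $t_2=0$, the cylinder $C_2$ contains two vertical saddle connections through the two zeros, and these are \emph{loops}. By Remark~\ref{rmk:distinct}, every vertical saddle connection is then a loop, so the vertical direction is reducible; more to the point, those two loops are homologous and, by the chain structure of $\Gam(X,\om)$, their union separates $X$ into a piece containing $C_1$ and a piece containing $C_3$. The vertical flow therefore cannot cross between the two halves, and on a core curve $\al_1\subset C_1$ the first return is literally rotation by $-t_1$; periodicity alone forces $t_1\in\Q/\Z$, and symmetrically for $t_3$. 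This same separation is what makes your expected formulas in~(3) work: with exactly two vertical cylinders $D_1,D_2$ on the $C_1$-side, and with $q$ taken to be the denominator of $t_1$ in lowest terms (not an arbitrary positive integer), the counts of passages through $\gam_0$ and $\gam_0'$ yield the displayed circumferences and heights directly.

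Your derivation of~(4), on the other hand, is correct and is more direct than the paper's. The paper reaches the same conclusion by writing the rationality constraint as a matrix equation in the coordinates $(1,t,t^2)$ of $K$ and showing that a certain $3\times 3$ matrix $M_R$ is singular (else one derives $uq^2=0$). Your observation that clearing denominators gives $\alpha(s-1)+\gamma h_2+\beta(sh_2+1)=0$ with $\alpha=q^2(1+\rho)\neq 0$ is a one-line replacement for that argument.
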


\begin{proof}
Choose a holomorphic $1$-form in $\GL^+(2,\R) \cdot (X_0,\om_0)$ with a horizontal saddle connection with distinct endpoints. By Theorem \ref{thm:Veech}, this holomorphic $1$-form has an irreducible periodic horizontal direction. The upper-triangular subgroup of $\GL^+(2,\R)$ preserves the horizontal direction, and the upper-triangular unipotent subgroup preserves horizontal cylinder heights and circumferences. By applying a diagonal matrix, we can arrange that $h_1 = 1$ and $c_1 = 1$. Then by applying an upper-triangular unipotent matrix, we can arrange that $t_2 = 0$. Let $(X,\om)$ be the resulting holomorphic $1$-form.

The claim in (1) is now immediate from the proof of Theorem \ref{thm:bound}. For the claim in (2), since $t_2 = 0$, the cylinder $C_2$ contains a pair of vertical saddle connections $\gam_1,\gam_2$ that are homologous loops. One component of $X \sm (\gam_1 \cup \gam_2)$ contains $C_1$, and the other component contains $C_3$. Let $\al_1 \subset C_1$ and $\al_3 \subset C_3$ be closed geodesics. By Theorem \ref{thm:Veech}, the vertical foliation is periodic. By Remark \ref{rmk:distinct}, since $\gam_1$ and $\gam_2$ are loops, the vertical direction must be reducible. The first-return map of the upward vertical flow on $\al_1$ is a rotation by $-t_1$. Since this rotation is periodic, $t_1 \in \Q/\Z$. Similarly, $t_3 \in c_3 \Q / c_3 \Z$.

For the claim in (3), abusing notation, choose a representative
\be
t_1 = \frac{k}{q} \in \Q \cap [0,1)
\ee
with $k \in \Z_{\geq 0}$, $q \in \Z_{>0}$, and $\gcd(k,q) = 1$. With $s$ as in Theorem \ref{thm:irred}, since $s$ is the length of a saddle connection $\gam_0$ in the boundary of a cylinder $C_1$ with circumference $c_1 = 1$, we can write
\be
s = \frac{p}{q} + r
\ee
with $p \in \Z_{\geq 0}$ and $0 \leq r < 1/q$. Note that $0 \leq k,p < q$. Since the vertical direction is reducible, there are exactly $2$ vertical cylinders in each component of $X \sm (\gam_1 \cup \gam_2)$. Let $D_1,D_2$ be the vertical cylinders in the component containing $C_1$. One of these cylinders, $D_1$, contains both the left and right ends of the interior of the saddle connection $\gam_0$. Let $\gam_0^\pr$ be the other saddle connection in the bottom boundary of $C_1$. The cylinder $D_1$ passes through $\gam_0$ a total of $p + 1$ times, and passes through $\gam_0^\pr$ a total of $q - (p + 1)$ times. The cylinder $D_2$ passes through $\gam_0$ a total of $p$ times, and passes through $\gam_0$ a total of $q - p$ times. Since the heights $h_1^\pr,h_2^\pr$ of $D_1,D_2$ satisfy $h_1^\pr + h_2^\pr = 1/q$, we have
\be
h_1^\pr = r = s - \frac{p}{q}, \quad h_2^\pr = \frac{1}{q} - r = \frac{p+1}{q} - s .
\ee
The circumferences $c_1^\pr,c_2^\pr$ of $D_1,D_2$ are given by
\begin{align*}
c_1^\pr &= (p+1) + (q-(p+1))(1+h_2) = q + (q-p-1)h_2, \\
c_2^\pr &= p + (q-p)(1+h_2) = q + (q-p)h_2 .
\end{align*}
By Theorem \ref{thm:Veech}, the ratio
\be
\frac{h_1^\pr c_2^\pr}{h_2^\pr c_1^\pr} = \frac{(qs - p)(q + (q-p)h_2)}{((p+1) - qs)(q + (q-p-1)h_2)}
\ee
is rational.

Fix $t \in K \sm \Q$, so $(1,t,t^2)$ is a $\Q$-basis for $K$. Write
\be
s = a_0 + a_1 t + a_2 t^2, \quad h_2 = b_0 + b_1 t + b_2 t^2, \quad sh_2 = k_0 + k_1 t + k_2 t^2,
\ee
with $a_j,b_j,k_j \in \Q$, and let
\be
u = 1 + \frac{h_1^\pr c_2^\pr}{h_2^\pr c_1^\pr} = \frac{q + (q - 2p - 1)h_2 + qsh_2}{(p+1)q - q^2 s + (p+1)(q-p-1)h_2 - q(q-p-1)sh_2} .
\ee
Note that $u \in \Q$ and $u > 1$. By considering the components of $1,t,t^2$ in the numerator and denominator above, we can rewrite this rationality constraint as a matrix equation
\begin{equation} \label{eq:mat}
\begin{bmatrix}
-2b_0 & 1+b_0+k_0 & b_0 \\
-2b_1 & b_1+k_1 & b_1 \\
-2b_2 & b_2+k_2 & b_2
\end{bmatrix}
\begin{bmatrix}
p+1 \\
q \\
1
\end{bmatrix}
= u
\begin{bmatrix}
-b_0 & 1+b_0+k_0 & -a_0-k_0 \\
-b_1 & b_1+k_1 & -a_1-k_1 \\
-b_2 & b_2+k_2 & -a_2-k_2
\end{bmatrix}
\begin{bmatrix}
(p+1)^2 \\
(p+1)q \\
q^2
\end{bmatrix} .
\end{equation}
Let $M_L$ be the $3$-by-$3$ matrix on the left-hand side, and let $M_R$ be the $3$-by-$3$ matrix on the right-hand side. Clearly, $M_L$ is not invertible. Suppose that $M_R$ is invertible. Then since
\be
M_L
\begin{bmatrix}
1 \\
0 \\
0
\end{bmatrix}
=
M_R
\begin{bmatrix}
2 \\
0 \\
0
\end{bmatrix} , \quad
M_L
\begin{bmatrix}
0 \\
1 \\
0
\end{bmatrix}
=
M_R
\begin{bmatrix}
0 \\
1 \\
0
\end{bmatrix} , \quad
M_L
\begin{bmatrix}
1 \\
0 \\
2
\end{bmatrix}
=
\begin{bmatrix}
0 \\
0 \\
0
\end{bmatrix} ,
\ee
it must be that
\be
M_R^{-1} M_L =
\begin{bmatrix}
2 & 0 & -1 \\
0 & 1 & 0 \\
0 & 0 & 0
\end{bmatrix} .
\ee
Then we have
\be
u
\begin{bmatrix}
(p+1)^2 \\
(p+1)q \\
q^2
\end{bmatrix}
=
M_R^{-1} M_L
\begin{bmatrix}
p+1 \\
q \\
1
\end{bmatrix}
=
\begin{bmatrix}
2 & 0 & -1 \\
0 & 1 & 0 \\
0 & 0 & 0
\end{bmatrix}
\begin{bmatrix}
p+1 \\
q \\
1
\end{bmatrix}
=
\begin{bmatrix}
2p+1 \\
q \\
0
\end{bmatrix}
\ee
and in particular, $uq^2 = 0$. Since $q > 0$ and $u > 1$, this is a contradiction. Therefore, $M_R$ is not invertible, meaning $h_2$, $1 + h_2 + sh_2$, $-s-sh_2$ are $\Q$-linearly dependent. Equivalently,
\be
s - 1, h_2, sh_2 + 1
\ee
are $\Q$-linearly dependent.
\end{proof}

\begin{proof} (of Theorem \ref{thm:14gon})
We use the notation from Theorem \ref{thm:irred} and the proofs of Theorems \ref{thm:bound} and \ref{thm:cand}. Choose $n$ and $(n_1,n_2,n_3)$ from the list in the proof of Theorem \ref{thm:bound}. Condition (4) in Theorem \ref{thm:cand} is readily checked for each element of the list in the proof of Theorem \ref{thm:bound}. None of the elements of this list with $n = 18$ satisfy condition (4). For $n = 7$, the only tuples satisfying condition (4) are $(1,5,3)$, $(5,3,1)$, and for $n = 14$, the only tuple satisfying condition (4) is $(1,11,5)$. We now analyze the remaining $3$ admissible solutions. \\

\paragraph{\bf Case 1:} $(1,5,3)$. Letting $t = 2\cos(2\pi/7)$, we have
\be
s = \frac{9}{7} - \frac{2}{7}t - \frac{3}{7}t^2, \quad h_2 = -1 + t + t^2, \quad sh_2 = -\frac{11}{7} + \frac{4}{7}t + \frac{6}{7}t^2 ,
\ee
and $-2(s-1) = sh_2 + 1$. Subtracting row 2 from row 3 in the matrix equation (\ref{eq:mat}) and solving for $1/u$, we get $1/u = (p + 1) - q/2$. Dividing both sides of row 2 by $u$, we get
\be
\left(-2(p + 1) + \frac{11}{7}q + 1\right)\left((p + 1) - \frac{1}{2}q\right) = -(p+1)^2 + \frac{11}{7}(p+1)q - \frac{2}{7}q^2
\ee
which simplifies to
\be
(p+1)^2 - (q+1)(p+1) + \frac{1}{2}q(q+1) = 0 .
\ee
The discriminant of this quadratic equation in $p+1$ is $1 - q^2$. Since $p+1$ is rational, $1 - q^2$ must be the square of a rational number. Since $q$ is a positive integer, $q = 1$, and since $k,p$ are integers satisfying $0 \leq k,p < q$, we have $k = p = 0$. Thus, $t_1 = 0$.

The cylinder digraph $\Gam(X,\om)$ has a symmetry (reversing the chain) that swaps $C_1,C_3$ and fixes $C_2$. By this symmetry, we can carry out a similar analysis on the other component of $X \sm (\gam_1 \cup \gam_2)$. Letting $s^\pr = c_3 - c_2 + c_1 - s$, we have
\be
\frac{s^\pr}{c_3} = \frac{18}{7} - \frac{4}{7}t - \frac{6}{7}t^2, \quad \frac{h_2}{c_3} = -2 + t + t^2, \quad \frac{s^\pr h_2}{c_3^2} = -\frac{40}{7} + \frac{12}{7}t + \frac{18}{7}t^2,
\ee
and $-3(s^\pr/c_3 - 1) = s^\pr h_2/c_3^2 + 1$. Write $t_3/c_3 = k^\pr/q^\pr \in \Q \cap [0,1)$ with $k^\pr \in \Z_{\geq 0}$, $q^\pr \in \Z_{>0}$, and $\gcd(k^\pr,q^\pr) = 1$. Write $s^\pr/c_3 = p^\pr/q^\pr + r^\pr$ with $p^\pr \in \Z_{\geq 0}$ and $0 \leq r^\pr < 1/q^\pr$. Then $s^\pr/c_3$, $h_2/c_3$, $s^\pr h_2/c_3^2$ satisfy an analogous matrix equation to (\ref{eq:mat}) with $p^\pr,q^\pr$ in place of $p,q$. This time, we get
\be
(p^\pr + 1)^2 - \left(\frac{4}{3}q^\pr + 1\right)(p^\pr+1) + \frac{2}{3}q^\pr(q^\pr+1) = 0 .
\ee
The discriminant of this quadratic equation in $p^\pr+1$ is $1 - 8(q^\pr)^2/9$. Since $p^\pr + 1$ is rational, $1 - 8(q^\pr)^2/9$ must be the square of a rational number. Since $q^\pr$ is a positive integer, $q^\pr = 1$, and since $k^\pr,p^\pr$ are integers satisfying $0 \leq k^\pr,p^\pr < q^\pr$, we have $k^\pr = p^\pr = 0$. Thus, $t_3 = 0$. In this case, $(X,\om)$ lies in the $\GL^+(2,\R)$-orbit of the Veech $14$-gon. \\

\paragraph{\bf Case 2:} $(5,3,1)$. Letting $t = 2\cos(2\pi/7)$, we have
\be
s = -\frac{8}{7} + \frac{1}{7}t + \frac{5}{7}t^2, \quad h_2 = t, \quad sh_2 = \frac{5}{7} + \frac{2}{7}t - \frac{4}{7}t^2 ,
\ee
and $4(s-1) - 2h_2 + 5(sh_2 + 1) = 0$. Solving for $1/u$ in row 1 of the matrix equation (\ref{eq:mat}), we get $1/u = (p+1) + q/4$. Then since $u > 1$, we have $0 < (p+1) + q/4 < 1$, which is impossible since $p \geq 0$ and $q > 0$. \\

\paragraph{\bf Case 3:} $(1,11,5)$. Letting $t = 2\cos(\pi/7)$, we have
\be
s = \frac{8}{7} - \frac{6}{7}t + \frac{2}{7}t^2, \quad h_2 = -1 + t, \quad sh_2 = -\frac{10}{7} + \frac{18}{7}t - \frac{6}{7}t^2,
\ee
and $-3(s-1) = sh_2 + 1$. Solving for $1/u$ in row 3 of the matrix equation (\ref{eq:mat}), we get $1/u = (p+1) - 2q/3$. Dividing both sides of row 1 by $u$, after simplifying we get
\be
(p+1)^2 - \left(\frac{4}{3}q + 1\right)(p+1) + \frac{2}{3}q(q+1) = 0 ,
\ee
which as in Case 1 implies $q = 1$ and $k = p = 0$. Thus, $t_1 = 0$.

Using the symmetry of the cylinder digraph $\Gam(X,\om)$ again, writing $t_3/c_3 = k^\pr/q^\pr$ and $s^\pr/c_3 = p^\pr/q^\pr + r^\pr$ as in Case 1, we similarly get
\be
(p^\pr + 1)^2 - (q^\pr + 1)(p^\pr + 1) + \frac{1}{2}q^\pr (q^\pr + 1) = 0,
\ee
which as in Case 1 implies $q^\pr = 1$ and $k^\pr = p^\pr = 0$. Thus, $t_3 = 0$. In this case, $(X,\om)$ lies in the $\GL^+(2,\R)$-orbit of the Veech $14$-gon.
\end{proof}


\bibliographystyle{math}
\bibliography{my.bib}

{\small
\noindent
Email: kwinsor@math.harvard.edu

\noindent
Fields Institute for Research in Mathematical Sciences, Toronto, Canada
}

\end{document}